\newtheoremstyle{plain} 
     {2ex}              
     {2ex}              
     {}         
     {}                 
     {\bfseries}        
     {}                 
     {1ex}              
     {\thmname{#1 }\thmnumber{#2}\thmnote{ \normalfont{(#3)}}.
}
\newtheoremstyle{remark}
     {2ex}              
     {2ex}              
     {}                 
     {}                 
     {\bfseries}        
     {}                 
     {1ex}              
     {\thmname{#1 }\thmnumber{#2}\thmnote{ \normalfont{(#3)}}.
}
\theoremstyle{plain}
\newtheorem{definition}{Definition}[section]
\newtheorem{remark}[definition]{Remark}
\newtheorem{proposition}[definition]{Proposition}
\newtheorem{lemma}[definition]{Lemma}
\newtheorem{theorem}[definition]{Theorem}
\newtheorem{example}[definition]{Example}
\numberwithin{equation}{section}
\newcommand{\NN}{\mathbb{N}}
\newcommand{\QQ}{\mathbb{Q}}
\newcommand{\RR}{\mathbb{R}}
\newcommand{\RRp}{\RR_+}
\newcommand{\CC}{\mathbb{C}}
\newcommand{\del}{\partial}
\newcommand{\dom}{\ensuremath{\mathrm{dom}}}
\newcommand{\mi}{\mathrm{i}}
\newcommand{\hl}[1]{\textnormal{\textbf{#1}}} 
\newcommand{\alphahat}{\ensuremath{\widehat{\alpha}}}
\newcommand{\Atilde}{\widetilde{A}}
\newcommand{\cl}[1]{\ensuremath{\overline{#1}}}
\newcommand{\grad}{\nabla}
\newcommand{\gradlog}{\grad_{\log}}
\newcommand{\Gcheck}{\check G}
\newcommand{\Gunder}{\underline{G}}
\newcommand{\holfunc}{\mathcal{O}}
\newcommand{\holgerm}{\mathcal{O}_p}
\newcommand{\htilde}{\ensuremath{\widetilde{h}}}
\newcommand{\jac}{J}
\newcommand{\jaclog}{\jac_{\log}}
\newcommand{\phat}{\ensuremath{\widehat{p}}}
\newcommand{\Phitilde}{\widetilde{\Phi}}
\newcommand{\Phitildeunder}{\underline{\Phitilde}}
\newcommand{\var}{\mathcal{V}}
\newcommand{\supp}{\operatorname{supp}}
\newcommand{\stirl}[2]{\genfrac{[}{]}{0pt}{}{#1}{#2}}
\newcommand{\Xtilde}{\ensuremath{\widetilde{X}}}
\begin{document}
\title[Improvements for multiple points]
  {Asymptotics of coefficients of \\ multivariate generating functions: \\
   improvements for multiple points}
\author{Alexander Raichev}
\address{Department of Computer Science \\ University of Auckland \\
  Private Bag 92019 \\ Auckland \\ New Zealand}
\email{raichev@cs.auckland.ac.nz}
\author{Mark C. Wilson}
\address{Department of Computer Science \\ University of Auckland \\
  Private Bag 92019 \\ Auckland \\ New Zealand}
\email{mcw@cs.auckland.ac.nz}
\begin{abstract}
Let $F(x)= \sum_{\nu\in\NN^d} F_\nu x^\nu$ be a multivariate power series with complex coefficients that converges in a neighborhood of the origin.
Assume $F=G/H$ for some functions $G$ and $H$ holomorphic in a neighborhood of the origin.
We derive asymptotics for the coefficients $F_{r\alpha}$ as $r \to \infty$ with $r\alpha \in \NN^d$ for $\alpha$ in a permissible subset of $d$-tuples of positive reals. More specifically, we give an algorithm for computing arbitrary terms of the asymptotic expansion for $F_{r\alpha}$ when the asymptotics are controlled by a transverse multiple point of the analytic variety $H = 0$. This improves upon earlier work  by R. Pemantle and M. C. Wilson.

We have implemented our algorithm in Sage and apply it to obtain accurate numerical results for 
several rational combinatorial generating functions. 

\end{abstract}
\date{\today}
\subjclass{05A15, 05A16}
\keywords{analytic combinatorics, multivariate, higher-order terms, singularity analysis}
\maketitle
\section{Introduction}\label{sec:intro}

In \cite{PeWi2002, PeWi2004} Pemantle and Wilson began a 
program of analytic combinatorics in several variables to derive asymptotic expansions of coefficients of combinatorial generating functions. 
In this article we continue that program by improving upon several of their results.

Let $F(x)= \sum_{\nu\in\NN^d} F_\nu x_1^{\nu_1}\cdots x_d^{\nu_d}$ be a complex power series with complex coefficients that converges in a neighborhood $\Omega$ of the origin.
Assume $F=G/H$ for some functions $G$ and $H$ holomorphic on $\Omega$.
For example, $F$ could be a rational function.
We derive asymptotics for the ``ray coefficients" $F_{r\alpha}$ as $r \to \infty$ with $r\alpha \in \NN^d$ for $\alpha$ in a permissible subset of $d$-tuples of positive reals.  

In \cite{PeWi2002}, Pemantle and Wilson derived the general form of the asymptotic expansion of $F_{r\alpha}$ for directions whose asymptotics are controlled by smooth points of the set $\var = \{x\in\Omega : H(x) = 0\}$ of singularities of $F$, that is, points where $\var$ is locally a complex manifold.
They gave an explicit formula for the leading term but no practical method for computing higher order terms.
In \cite{RaWi2008a}, a computational extension of \cite{PeWi2002}, we devised an algorithm and Maple implementation for computing these higher order terms.

In \cite{PeWi2004}, Pemantle and Wilson generalized \cite{PeWi2002} by deriving the form of the asymptotic expansion of $F_{r\alpha}$ for directions whose asymptotics are controlled by multiple points of $\var$, that is, points where $\var$ is locally a finite union of complex manifolds.
Again, they gave an explicit formula for the leading term but no practical method for computing higher order terms.
This article is a computational extension of \cite{PeWi2004} analogous to \cite{RaWi2008a}.
Herein we devise an algorithm and Sage~\cite{Sage} implementation for computing these higher order terms.

Why is it important in asymptotic analysis to have algorithms to compute higher order terms? 
There are several reasons.
First, the form of the algorithm or formulas itself can be insightful.
For example, the recasting of smooth point results in terms of Gaussian curvature in \cite{BBBP2011} yields a much clearer understanding, independent of coordinates, of how local geometry controls the asymptotic scale.
Second, computing higher order terms often gives one higher numerical accuracy at small values of $r$ than using the leading term alone.
Third, sometimes computing higher order terms is necessary.
For example, computing the variance of random variables via a generating function often requires third order asymptotics.
Fourth, computing higher order terms can be difficult or downright infeasible by hand ---indeed, this is usually the case with multivariate asymptotics.
An algorithm allows the end user to pass on the task to a computer.

\subsection*{Our Contribution}\label{ss:contribution}

In this article we give an algorithm for computing arbitrary terms of the asymptotic expansion for $F_{r\alpha}$ for directions whose asymptotics are controlled by a multiple point of $\var$ of order $n \ge 1$. 
We do this by first deriving an explicit formula in Section~\ref{sec:asymptotics_special} for the special case where $n \leq d$ and the ideal generated by the germ of $H$ in the ring of germs of holomorphic functions is radical. 
This generalizes the formula for the smooth point case $n = 1$ in \cite[Theorem 3.2]{RaWi2008a} and improves upon the formula in \cite[Theorem 3.5]{PeWi2004}, which gave an explicit formula for only the leading term.
We then show in Section~\ref{sec:asymptotics} how to reduce the general multiple point case to the special case.
This gives a unified method for the computation of higher-order asymptotics
that works for any value of $n$ and $d$. 
Our method of derivation uses Fourier-Laplace integrals as in \cite{PeWi2004}, but avoids the complications of infinite stationary phase sets.
We have implemented our algorithm in an open-source Sage file called \texttt{amgf.sage} that is downloadable from \href{http://www.alexraichev.org/research.html}{Raichev's website}, and in Section~\ref{sec:examples} we employ it to work out examples.
Section~\ref{sec:proofs} contains most of our proofs. 

To the best of our knowledge, the algorithms here and in \cite{RaWi2008a} are the first explicit, practical, and fairly general methods in the multivariate combinatorics literature for computing higher order asymptotic expansions.
\section{Preliminaries}\label{sec:preliminaries}

Throughout this article we make use of basic facts from local analytic geometry, a good reference for which is \cite{dePf2000}.

For brevity we write a power series $\sum_{\nu\in\NN^d} a_\nu (x_1 - p_1)^{\nu_1}\cdots (x_d - p_d)^{\nu_d}$ as $\sum_\nu a_\nu (x - p)^\nu$ and use the multi-index notation $\nu! = \nu_1!\cdots\nu_d!$, $r\nu = (r\nu_1, \ldots, r\nu_d)$, $\nu + 1 = (\nu_1 + 1, \ldots, \nu_d + 1)$, and $\del^\nu = \del_1^{\nu_1} \cdots \del_d^{\nu_d}$, where $\del_j$ is partial differentiation with respect to component $j$.

Let $\holfunc(\Omega)$ denote the $\CC$-algebra of holomorphic functions on an open set $\Omega\subseteq\CC^d$ and $\holgerm$ the $\CC$-algebra of germs of holomorphic functions at $p \in \CC^d$.
The latter algebra is a local Noetherian factorial ring whose unique maximal ideal is the set $\{f \in \holgerm : f(p)=0 \}$ of non-units.

We refer often to both $d$-tuples and $(d - 1)$-tuples and write $\hat a = (a_1, \ldots, a_{d - 1})$ given a tuple $a= (a_1, \ldots, a_d)$.
For simplicity we assume $d \ge 2$, though our formulas below also apply in the case $d = 1$ of univariate functions, after making the simple changes described in \cite[Remark 3.6]{RaWi2008a}. 

Let $\Omega\subseteq\CC^d$ be a neighborhood of the origin (an open subset of $\CC^d$ containing the origin) and $F(x) =\sum_{\nu} F_\nu x^\nu \in \holfunc(\Omega)$.
Assume $F = G/H$ for some relatively prime $G, H \in \holfunc(\Omega)$.
Let $\var$ be the set of singularities of $F$, namely the analytic variety $\{x \in \Omega: H(x)=0\}$ determined by $H$.
We will derive asymptotics for the ray coefficients $F_{r\alpha}$ as $r \to \infty$ with $r\alpha \in \NN^d$ for $\alpha$ in a permissible subset of $\RRp^d$, the set of $d$-tuples of positive reals.
For asymptotics of $F_\nu$ when $d = 2$ and $\nu \to \infty$ along more general paths see \cite{Llad2006}.

To begin we recall several key definitions from \cite{PeWi2002,PeWi2004}.

Just as in the univariate case, asymptotics for the coefficients of $F$ are determined by the location and type of singularities of $F$, that is, by the geometry of $\var$. 
Generally the singularities closest to the origin are the most important. 
We define `closest to the origin' in terms of polydiscs.
For $p \in \CC^d$, let $D(p) =\{x \in \CC^d : \forall j\; |x_j|\le|p_j|\}$ and $C(p) =\{x \in \CC^d : \forall j\; |x_j| = |p_j|\}$ be the respective polydisc and polycircle centered at the origin with polyradius determined by $p$.

\begin{definition}
We say that a point $p\in\var$ is \hl{minimal} if $\var\cap D(p)$ is contained in the boundary of $D(p)$, that is, if there is no point $x\in\var$ such that for all $j$, $|x_j| < |p_j|$.
We say that $p\in\var$ is \hl{strictly minimal} if $\var\cap D(p) = \{p\}$, and we say that $p$ is \hl{finitely minimal} if $\var\cap D(p)$ is finite.
\end{definition}

Note that $\var$ always contains minimal points.
To see this, let $p\in\var$ and define $f:\var\cap D(p)\to\RR$ by
$f(x)=\sqrt{x_1^2 +\cdots+ x_d^2}$. 
Since $f$ is a continuous function on a compact space, it has a minimum, and that minimum is a minimal point of $\var$.

The singularities of $F$ with the simplest geometry are the smooth/regular points of $\var$.
Asymptotics for $F_{r\alpha}$ dependent on smooth points were derived in \cite{PeWi2002,RaWi2008a}. 
Here we focus on asymptotics dependent on points with the next simplest geometry, that is, multiple points.

\begin{definition}\label{multiple}
Let $p\in\var$ and	consider the unique factorization of the germ of $H$ in $\holgerm$ into irreducible germs.  
Choosing representatives for these germs gives a factorization $H= H_1^{a_1} \cdots H_n^{a_n}$ valid in a neighborhood of $p$.
We say that $p$ is a \hl{multiple point of order $n$} if
\begin{itemize}
    \item for all $j$ we have $H_j(p) =0$, and
    \item every set of at most $d$ vectors from 
    $\{\grad H_1(p), \ldots, \grad H_n(p) \}$ is linearly independent.
\end{itemize}
We say that $p$ is a \hl{convenient multiple point of order $n$} if $p$ is a multiple point of order $n$ and there exists an index $k$ such that for all $j$ we have $p_k \del_k H_j(p) \neq 0$.
\end{definition}

In other words, $p$ is a multiple point of $\var$ iff $\var$ is locally a union of $n$ complex manifolds that intersect transversely at $p$
\footnote{In keeping with \cite{PeWi2008} we are simplifying matters by assuming transversality.  
For a more general definition of `multiple point' see \cite{PeWi2004}.}.
In particular, the multiple points of $\var$ of order $n=1$ are exactly the smooth points of $\var$, and so multiple points are generalizations of smooth points.
Notice also that the definition above depends only on information about $H$ in an arbitrarily small neighborhood of $p$ and so it is independent of the germ representatives chosen.
Lastly, to derive an asymptotic expansion of the coefficients $F_{r\alpha}$ we will need to consider the singularities of $F$ relevant to the direction $\alpha$. 
We call these singularities critical points, and they arise when approximating the Fourier-Laplace integrals we use to approximate $F_{r\alpha}$ (in Lemmas~\ref{Hormander} and \ref{critical_points}).
They also have a stratified Morse theoretic interpretation which, in the interest of simplicity, we will not pursue here; for more details see \cite[Section 3.1]{PeWi2008}.

\begin{definition}
Let $\alpha\in\RRp^d$ and let $p\in\var$ be a convenient multiple point and choose an index $k$ such that $p_k \del_k H_j(p)\neq 0$ for all $j=1, \ldots, n$.
Consider the scaled logarithmic gradient vectors
\[
\gamma_j(p) 
=
\left(
\frac{p_1 \del_1 H_j(p)}{p_k \del_k H_j(p)},\ldots,\frac{p_d \del_d H_j(p)}{p_k \del_k H_j(p)}
\right)
\]
for $j=1,\ldots, n$.
We say that $p$ is \hl{critical} for $\alpha$ if 
\[
\left( \frac{\alpha_1}{\alpha_k},\ldots,\frac{\alpha_d}{\alpha_k} \right)
= 
\sum_{j=1}^n s_j \gamma_j(p)
\]
for some $s_j \ge 0$, that is, if $\alpha$ lies in the conical hull of the $\gamma_j(p)$, which we call the \hl{critical cone} of $p$.
\end{definition}

\section{The full asymptotic expansion: special case}\label{sec:asymptotics_special}  

Let $p\in\var$ be a convenient multiple point of order $n$, and let $H= H_1^{a_1} \cdots H_n^{a_n}$ be a local factorization of $H$ about $p$ as above.

Without loss of generality and for concreteness and ease of notation, suppose $p_d \del_d H_j(p)\neq 0$ for all $j$.
Henceforth we breaking symmetry and base our explicit calculations on the index $d$.
For instance, when we talk about critical points, we divide by the index-$d$ terms $p_d \del_d H_j(p)$. 

\begin{remark}
For the remainder of this section we assume the special case of 
$a_1 =\ldots= a_n =1$ and $n\le d$.
\end{remark}

Now, to state our main results we need to define several auxiliary sets and functions, most of which are derived from $G$ and $H$ and arise from the integration tricks we use to approximate $F_{r\alpha}$.
The reader should feel free to skim over these definitions on a first reading, and move on to the main results starting at Theorem~\ref{asymptotics_soft}. 

We parametrize the $d$th coordinate in terms of the first $d-1$ coordinates.
Since $\del_d H_j(p)\neq 0$ for all $j$, we can apply the Weierstrass preparation theorem to each $H_j$ to get 
\[
  H_j(w,y) = U_j(w,y) \left( y - \frac{1}{h_j(w)} \right)
\]
in a neighborhood of $p$, where $U_j$ is holomorphic and nonzero at $p$, $h_j$ is holomorphic in a neighborhood of $\phat$ with $1/h_j(\phat) =p_d$, and  $\del_d H_j(w,1/h_j(w)) \neq 0$.
Thus
\begin{align*}
    H(w,y) 
    =& 
    U(w,y) \prod_{j=1}^n \left( y -\frac{1}{h_j(w)} \right)^{a_j} \\
    =& 
    U(w,y) \prod_{j=1}^n \left( \frac{-y}{h_j(w)} \right)^{a_j} 
	\prod_{j=1}^n \left( \frac{1}{y} -h_j(w) \right)^{a_j} \\
	=&
	U(w,y) \prod_{j=1}^n \frac{-y}{h_j(w)}  
	\prod_{j=1}^n \left( \frac{1}{y} -h_j(w) \right) 
	\quad\text{(since $a_1=\cdots=a_n=1$)}\\
\end{align*}
in a neighborhood of $p$, where $U=U_1\cdots U_n$.
We use reciprocals, because they turn out to be convenient for proving Lemma~\ref{critical_points} later on.

For $n\geq 2$ let
	$\Delta
    = 
    \{s\in\RR^{n-1} : s_j \ge 0 \text{ for all } j \text{ and } 
    \sum_{j=1}^{n-1} s_j \le 1 \}$,
the standard orthogonal simplex of dimension $n-1$.
This simplex comes from the residue calculation in Lemma~\ref{residue}.

Let $W$ be a neighborhood of $\phat$ on which the $h_j$ are defined. 
For $j= 0,\ldots, n-1$ and $\alpha\in\RRp^d$ define the functions $h:W\times\Delta\to\CC$, $A_j:\dom(U)\to\CC$, $e:[-1,1]^{d-1}\to\CC^{d-1}$, and $\Atilde_j,\htilde,\Phitilde:e^{-1}(W\cap C(\phat))\times\Delta\to\CC$ by
\begin{align*}
    \Gcheck(w,y)
    &=
    \frac{G(w,y)}{U(w,y)}\prod_{j=1}^n \frac{-h_j(w)}{y}\\
    h(w,s)  
    &= 
    s_1 h_1 + \cdots + s_{n-1} h_{n-1} + (1-\sum_{j=1}^{n-1} s_j)h_n \\
    A_j(w,y)    
    &= 
    (-1)^{n-1} y^{-n+j} \left( \frac{\del}{\del y} \right)^j \Gcheck(w,y^{-1}) \\
    e(t)    
    &= 
    (p_1 \exp(\mi t_1), \ldots, p_{d-1} \exp(\mi t_{d-1})) \\
    \htilde(t,s)
    &= 
    h(e(t),s) \\
    \Atilde_j(t,s)   
    &= 
    A_j(e(t),\htilde(t,s)) \\
    \Phitilde(t,s)
    &= 
    -\log (p_d \htilde(t,s)) +\mi\sum_{m=1}^{d-1} \frac{\alpha_m}{\alpha_d} t_m \\
\end{align*} 
Note that 
$F(w,y) = \Gcheck(w,y) / \prod_{j=1}^n (y^{-1} - h_j(w))$ and
that $\htilde$, $\Atilde_j$, and $\Phitilde$ are all $p^\infty$ functions.
The function $h$ comes from the residue calculation in Lemma~\ref{residue}, and the functions $e$$, \htilde$, $\Atilde_j$, and $\Phitilde$ come from the exponential change of variables in Lemma~\ref{FL_integral}. 

Let $\jaclog (H,p)$ denote the $n\times d$ logarithmic Jacobian matrix, the $j$th row of which is the logarithmic gradient vector $\gradlog H_j(p)=(p_1 \del_1 H_j(p),\ldots,p_d \del_d H_j(p))$.
Notice that if the convenient multiple point $p$ has all nonzero coordinates, then every subset $S \subseteq \{\gradlog H_1(p),\ldots,\gradlog H_n(p) \}$ spans a subspace of $\CC^d$ of dimension $|S|$.
Logarithmic gradients arise, essentially, from the exponential change of variables used to get a Fourier-Laplace integral in Lemma~\ref{FL_integral}.

If $\alpha$ is critical for $p$, then
\[
\alpha = \left( \frac{\alpha_d s_1^*}{p_d \del_d H_1(p)}, \ldots, 
\frac{\alpha_d s_n^*}{p_d \del_d H_n(p)} \right) \jaclog(p) 
\]
for some nonnegative tuple $s^*$ with $\sum_{j=1}^n s_j^* =1$.
Moreover, if $p$ has all nonzero coordinates, then the tuple $s^*$ is unique since $\jaclog(p)$ has rank $n\le d$. 
Let $\theta^*= (0,\ldots,0,s_1^*,\ldots,s_{n-1}^*) \in\RR^{d-1}\times\Delta \subset\RR^{d+n-2}$.

If the Hessian $\det \Phitilde''(\theta^*)$ is nonzero, then $p$ is called $\hl{nondegenerate}$ for $\alpha$. 
Critical points and nondegeneracy come into play in Lemmas~\ref{Hormander} and \ref{critical_points}.

\begin{remark}
In the smooth point case $n=1$ we can simplify the definitions above.  
In that case $H= H_1^{a_1}$ with $a_1=1$ (in this section) and we set
\begin{align*}
    h(w)
    &= 
    h_1(w) \\
    A_0(w)
    &=  
    y^{-1} \Gcheck(w,y^{-1}) \Big|_{y=h(w)} \\
    \Atilde_0(t) 
    &=
    A_0(e(t)) \\
    \htilde(t)
    &= 
    h(e(t)) \\
    \Phitilde(t) 
    &= 
    -\log(p_d \htilde(t) ) 
    +\mi\sum_{m=1}^{d-1} \frac{\alpha_m}{\alpha_d} t_m \\
    \theta^*
    &= 
    t^* = 0.
\end{align*}
\end{remark}

With the setup above we can now get to our main theorem.
It is an elaboration of the following formula that appeared in \cite{PeWi2004}.

\begin{theorem}\label{asymptotics_soft}
Let $\alpha\in\RRp^d$ and $p\in\var$ be a strictly minimal convenient multiple point with all nonzero coordinates that is critical and nondegenerate for $\alpha$.
Then there exist constants $b_q(\alpha)$ such that
\[
    F_{r\alpha} \sim 
    c^{-r\alpha } 
	\Bigg[
    (2\pi )^{(n-d)/2} (\det \Phitilde''(\theta^*))^{-1/2} 
	\sum_{q\ge 0}  b_q(\alpha) (r\alpha_d)^{(n-d)/2 - q}
    \Bigg]
\]
as $r \to \infty$ with $r\alpha \in \NN^d$.
Here $\Phitilde''(\theta^*)$ is the $(d + n - 2) \times (d + n - 2)$ Hessian matrix of $\Phitilde$ evaluated at $\theta^*$. 
\end{theorem}

\begin{proof}
Proved in \cite[Theorem 3.9]{PeWi2004}.
\end{proof}

We give an explicit formula for all the coefficients $b_q(\alpha)$.
Previously, only $b_0(\alpha)$ was known.

\begin{theorem}\label{asymptotics}
Let $\alpha\in\RRp^d$ and $p\in\var$ be a strictly minimal convenient multiple point with all nonzero coordinates that is critical and nondegenerate for $\alpha$.
Then

\begin{align*}
\label{asy_formula}
    F_{r\alpha} \tag{$\star$}
    =&
    c^{-r\alpha } \Bigg[
    (2\pi )^{(n-d)/2} (\det \Phitilde''(\theta^*))^{-1/2}
    \sum_{q=0}^{N-1}
 	(r\alpha_d)^{(n-d)/2 -q} \\
	& \times
    \sum_{\substack{0 \le j \le \min\{n-1,q\} \\ \max\{0, q-n\}  \le k \le q \\ 
	j+k \le q}}
    L_k(\Atilde_j, \Phitilde) \binom{n-1}{j} \stirl{n-j}{n+k-q} (-1)^{q-j-k}
      \\
    &
    + O\left((r\alpha_d)^{(n-d)/2-N}\right)
    \Bigg],
\end{align*}
as $r \to \infty$ and $r\alpha \in \NN^d$.

Here
\begin{align*}
    L_k(\Atilde_j,\Phitilde)
    &= 
	\sum_{l=0}^{2k} 
    \frac{\mathcal{H}^{k+l} (\Atilde_j \Phitildeunder^l)(\theta^*)}
    {(-1)^k 2^{k+l} l! (k+l)!}, \\
	\Phitildeunder(\theta)
	&= 
	\Phitilde(\theta)-\Phitilde(\theta^*)-\frac{1}{2} 
	(\theta-\theta^*) \Phitilde''(\theta^*) (\theta-\theta^*)^T,
\end{align*}
the differential operator $\mathcal{H}$ is given by
\[
	\mathcal{H} 
	= 
	-\sum_{1 \le a,b \le d+n-2} (\Phitilde''(\theta^*)^{-1})_{a,b} 
	\del_a\del_b,
\]
and $\stirl{a}{b}$ denotes the Stirling numbers of the first kind.
In every term of $L_k(\Atilde_j,\Phitilde)$ the total number of derivatives of $\Atilde_j$ and of $\Phitilde''$ is at most $2k+j$.

Moreover, for each positive integer $N$ the big-oh constant of (\ref{asy_formula}) stays bounded as $\alpha$ varies within a compact subset of $\RRp^d$ of the critical cone of $p$.
\end{theorem}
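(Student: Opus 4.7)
The plan is to compute the asymptotic expansion by carrying out the Fourier--Laplace analysis of \cite[Section~4]{PeWi2004} in explicit symbolic form. I start from the Cauchy integral
\[
F_{n\alpha} = \frac{1}{(2\pi\mi)^d} \oint_T \frac{G(x)}{H(x)} x^{-n\alpha-1} \dif{x},
\]
with $T$ a product torus just inside the polycircle $C(c)$. Strict minimality, together with the Weierstrass factorization of $H$ recalled above, allows me to deform the $y$-contour outward past $|y|=|c_d|$, picking up the residues at the simple $y$-poles $y = 1/h_j(w)$ for $j = 1,\ldots,r$; the deformed outer piece is exponentially smaller than any power of $n$ and is absorbed into the $O(n^{(r-d)/2-N})$ error.

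The key algebraic step is to collapse the $r$ simple $y$-poles into a single pole of order $r$ via the Hermite--Genocchi identity
\[
\prod_{j=1}^r \frac{1}{y^{-1} - h_j(w)} = (r-1)! \int_{\Delta} \frac{\dif{s}}{(y^{-1} - h(w,s))^r}.
\]
After the substitution $y = 1/\eta$, the residue at $\eta = h(w,s)$ becomes an $(r-1)$-fold $\eta$-derivative to which Leibniz's rule applies: the $j$-fold $\eta$-derivative of $\Gcheck(w,\eta^{-1})$ produces, after extracting $\eta^{r-j}$, exactly the function $A_j(w,\eta)$, while the $(r-1-j)$-fold $\eta$-derivative of $\eta^{\alpha_d n - 1}$ produces the falling factorial in $P_j(n)$ together with the binomial $\binom{r-1}{j}$. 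Parametrizing the remaining $w$-contour by $w = e(t)$ and gathering the exponentials into the phase $\Phitilde$ reduces $F_{n\alpha}$, up to an exponentially small remainder, to a finite linear combination of oscillatory integrals of the form $P_j(n) \int_{[-1,1]^{d-1} \times \Delta} \Atilde_j(t,s)\, \me^{-\alpha_d n\, \Phitilde(t,s)}\dif{t}\dif{s}$.

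Each inner integral is now amenable to the standard multivariate stationary phase expansion in dimension $d-1 + r-1 = d+r-2$. Criticality of $c$ for $\alpha$ makes $\theta^* = (0,\ldots,0,s_1^*,\ldots,s_{r-1}^*)$ a stationary point of $\Phitilde$; nondegeneracy makes $\Phitilde''(\theta^*)$ invertible; and strict minimality, via a standard cutoff argument, ensures that only $\theta^*$ contributes to the polynomial part of the expansion. Stationary phase then produces the factor $(2\pi/(\alpha_d n))^{(d+r-2)/2} \det \Phitilde''(\theta^*)^{-1/2}$ together with the series $\sum_k L_k(\Atilde_j,\Phitilde)(\alpha_d n)^{-k}$ built from the operator $\mathcal{H}$ and its iterates on $\Atilde_j \Phitildeunder^l$; the stated bound of at most $2k+j$ derivatives in each term of $L_k(\Atilde_j,\Phitilde)$ follows from the vanishing of $\Phitildeunder$ to third order at $\theta^*$. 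The $(2\pi)^{-(d-1)}$ from the Cauchy parametrization combines with the $(2\pi)^{(d+r-2)/2}$ from stationary phase to give the global $(2\pi)^{(r-d)/2}$ of the theorem.

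The remaining task is to merge the expansions of $P_j(n)$ and $(\alpha_d n)^{-k}$ into a single series indexed by $q$. Expanding $(x-1)(x-2)\cdots(x-m) = \sum_l (-1)^{m-l}\stirl{m+1}{l+1} x^l$ at $x = \alpha_d n$ and $m = r-1-j$ produces the unsigned Stirling number and sign that appear in the formula, and a direct index calculation shows that the total power of $\alpha_d n$ in each triple $(j,k,l)$ equals $(r-d)/2 - q$ with $q = j + k + (r-1-j) - l$; collecting like powers gives the stated ranges on $j$ and $k$, with terms outside the ranges vanishing because their Stirling number is zero. The uniformity of the error as $\alpha$ varies over a compact subset of the critical cone follows from the smooth dependence of $s^*$, $\theta^*$, and $\Phitilde''(\theta^*)$ on $\alpha$, which is guaranteed by the full rank of $\jaclog(c)$. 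The main obstacle, and the step I expect will require the most care, is aligning the shifted Stirling identity with the index conventions of the formula so that the signs $(-1)^{q-j-k}$ and binomials $\binom{r-1}{j}$ match exactly; the underlying residue and stationary phase analyses are essentially routine given the hypotheses.
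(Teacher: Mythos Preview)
Your proposal is correct and follows essentially the same route as the paper: Cauchy integral, residue in the last variable, the Hermite--Genocchi/divided-difference representation over $\Delta$ (the paper cites this as \cite[Ch.~4, \S7]{DeLo1993} in Lemma~\ref{residue}), Leibniz to produce the $P_j(n)A_j$ decomposition (Lemma~\ref{FL_integral}), H\"ormander's stationary-phase expansion (Lemma~\ref{Hormander}) after a bump-function localization justified by Lemma~\ref{critical_points}, and finally the Stirling-number expansion of $P_j(n)$ collected by powers of $\alpha_d n$. The only cosmetic difference is that you apply Hermite--Genocchi to the product $\prod_j (y^{-1}-h_j)^{-1}$ before taking the $y$-residue, whereas the paper first takes residues and then recognizes the resulting sum as a divided difference; the two orderings yield the same simplex integral, and your shifted Stirling identity $(x-1)\cdots(x-m)=\sum_l(-1)^{m-l}\stirl{m+1}{l+1}x^l$ is exactly the paper's $a^{\underline{m}}=\sum_l\stirl{m}{l}(-1)^{m-l}a^l$ after dividing by $a$.
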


\begin{proof}
In the next section.
\end{proof}

\begin{remark}
	
In the smooth point case $n=1$, (\ref{asy_formula}) agrees with the formula in \cite[Theorem 3.2]{RaWi2008a}.
Moreover, in that case we can allow coordinates of $p$ to be zero as long as
$p_k \del_k H(p) \not= 0$ for some $k$.  
Also, when $n=1$ and $d=2$ we can drop the nondegeneracy hypothesis (\cite[Theorem 3.3]{RaWi2008a}).
\end{remark}

\begin{proposition}\label{Hessian}
Under the hypotheses of Theorem~\ref{asymptotics} we have
\[
    \Phitilde''(\theta^*)
    =
    \begin{pmatrix}
    A & -\mi C^T \\
    -\mi C & 0     
    \end{pmatrix},
\]
where $A$ is a $(d-1) \times (d-1)$ matrix, $C$ is an $(r - 1) \times (d - 1)$ real matrix, and
\begin{align*}
    A_{kl}
    &=
    \del_k \del_l \Phitilde (\theta^*) \\
    C_{kl} 
    &= 
    \frac{p_l \del_l H_k(p)}{p_d \del_d H_k(p)} - 
    \frac{p_l \del_l H_n(p)}{p_d \del_d H_n(p)}.
\end{align*}    
Notice that we only take derivatives with respect to $t$ in $A$.
\end{proposition}

\begin{proof}
Since $\Phitilde$ is $C^\infty$, its Hessian matrix is symmetric.
The formula for $A$ follows by definition.
To compute the remainder of the Hessian, let $s_n = \sum_{j<n} s_j$ for notational convenience.
For $l < d$ we have
\begin{align*}
    \frac{\del \Phitilde}{\del t_l}(0, s)
    &= 
    \frac{-\mi p_l \exp(\mi t_l) \sum_{j \le n} s_j \del_l h_j(e(t))}
    {\htilde(t, s)} + 
    \mi \frac{\alpha_l}{\alpha_d} \Big|_{(0,s)} \\
    &=
    -\mi p_d p_l \sum_{j \le n} s_j \del_l h_j(\phat) +
    \mi \frac{\alpha_l}{\alpha_d} \\
    \frac{\del \Phitilde}{\del s_l}(0, s)
    &=
    \frac{-h_l(e(t)) + h_n(e(t))}{\htilde(t, s)} \Big|_{(0,s)} \\
    &= 0.
\end{align*}
By the implicit function theorem we have
$\del_l h_j(w) = h_j(w)^2 \del_l H_j(w,1/h_j(w)) / \del_d H_j(w,1/h_j(w))$ for
$l<d$, $j \le n$, and $w\in W$.
So for $k, l < d$ we have
\begin{align*}
    \frac{\del^2 \Phitilde}{\del s_k \del t_l}(0, s)
    &= 
    -\mi p_d p_l (\del_l h_k(\phat) - \del_l h_n(\phat)) 
    \quad \text{(since $\del s_n /\del s_j = -1$)} \\
    &=
    -\mi \left( 
    \frac{p_l \del_l H_k(p)}{p_d \del_d H_k(p)} - 
    \frac{p_l \del_l H_n(p)}{p_d \del_d H_n(p)}
    \right) \\
    \frac{\del \Phitilde}{\del s_l}(0, s)
    &= 0,
\end{align*}    
as desired. 

Finally each $C_{kl} \in \RR$, because by \cite[Proposition 3.12]{PeWi2008} each $\frac{p_l \del_l H_k(p)}{p_d \del_d H_k(p)} \in \RR$.
\end{proof}

\begin{theorem}\label{n=d}
Under the hypotheses of Theorem~\ref{asymptotics}, when $n=d$ there exists $\epsilon \in (0,1)$ such that
\[
    F_{r\alpha}
    =
    p^{-r\alpha}\left[
    \frac{ \pm G(p)}{\det H'(p) \prod_{j\le d} p_j} +
    O(\epsilon^r) 
    \right]
\]    
as $r \to \infty$.
Here $H'(p)$ is the $n \times d$ Jacobian matrix of $H$ evaluated at $p$.

Moreover, the big-oh constant stays bounded as $\alpha$ varies within a compact subset of $\RRp^d$ of the critical cone of $p$.
\end{theorem}

\begin{proof}
By \cite[Corollary 3.24]{PeWi2008} all terms beyond the leading term in the asymptotic expansion of $p^{r\alpha} F_{r\alpha}$ are zero and the error term is exponentially decreasing.
(This follows from a Leray residue argument on the Cauchy integral of $F_{r\alpha}$.)

According to (\ref{asy_formula}) the leading term is 
$\frac{L_0(\Atilde_0, \Phitilde)}{\sqrt{\det(\Phitilde''(\theta^*))}}$.
First,
\begin{align*}
    L_0(\Atilde_0, \Phitilde)
    &=
    \mathcal{H^0}(\Atilde_0)(\theta^*) 
    =
    A_0(\phat, \frac{1}{p_d}) \\
    &=
    (-1)^{n-1} p_d^r \Gcheck(p) 
    =
    (-1)^{n-1} p_d^r \frac{G(p)}{U(p)} \prod_{j \le n} \frac{-1}{p_d^2} \\
    &=
    \frac{-G(p)}{U(p) p_d^r} 
    =
    \frac{-G(p)}{\prod_{j \le n} p_d \del_d H_j(p)}.
\end{align*}
Second, by Proposition~\ref{Hessian}, $\sqrt{\det(\Phitilde''(\theta^*))} = \sqrt{(\det C)^2} = |\det C|$ since $C$ is a real matrix.
Now consider the $n \times d$ matrix $\Gamma$ whose $j$th row is the scaled logarithmic gradient vector $\gamma_j(p) = \left(
\frac{p_1 \del_1 H_j(p)}{p_d \del_d H_j(p)}, \ldots, \frac{p_d \del_d H_j(p)}{p_d \del_d H_j(p)} \right)$.
Then
\begin{align*}
    \det C 
    =&
    \det 
    \begin{pmatrix}
    \gamma_1(p) - \gamma_n(p) \\
    \cdots \\
    \gamma_{n-1}(p) - \gamma_n(p) \\
    \gamma_n(p)    
    \end{pmatrix} \\
     & 
    \text{(by expanding the latter $n \times d$ matrix 
    by minors along its last column)} \\
    =&
    \det \Gamma \\
     &
    \text{(by similarity via elementary row operations)} \\ 
    =&
    \frac{\prod_{j \le n} p_j}{\prod_{j \le n}p_d \del_d H_j(p)} \det H'(p).
\end{align*} 
This proves the result.
\end{proof}

\section{Proving Theorem~\ref{asymptotics}}

To prove Theorem~\ref{asymptotics} we follow an approach similar to that of 
\cite{PeWi2002,PeWi2004,RaWi2008a}.
However, in contrast to those articles, here we first assume that $H$ has the relatively simple local factorization $H = H_1 \cdots H_n$ with $n \le d$ and then show in Section~\ref{sec:asymptotics} how to reduce to this case.
We take the following steps.
\begin{description}
\item[Step 1] Use Cauchy's integral formula to express $p^{r\alpha} F_{r\alpha}$ as a $d$-variate integral over a contour $C$ in $\Omega$. 
\item[Step 2] Expand the contour $C$ across $p_d$ and use Cauchy's residue theorem to  express the innermost integral as a residue.
\item[Step 3] Rewrite the residue as an $n$-variate integral over the simplex $\Delta$.
\item[Step 4] Rewrite the resulting integral as a Fourier-Laplace integral. 
\item[Step 5] Approximate the integral asymptotically.
\end{description}

Starting at step 1, we use Cauchy's integral formula to write
\[
    p^{r\alpha} F_{r\alpha} 
    =
 	p^{r\alpha} \frac{\del^{r\alpha} F(0)}{(r\alpha!)}
	=
    p^{r\alpha} \frac{1}{(2\pi\mi)^d} \int_C \frac{G(w) dw}{w^{r\alpha+1} H(w)},
\] 
where $C$ is a contour in $\Omega$.
We then follow steps 2--5 by applying the following lemmas, the proofs of which have been swept away to Section~\ref{sec:proofs} to clarify the logical flow of the main argument.

\begin{lemma}[for step 2]\label{integral}  
Let $\alpha\in\RRp^d$ and $p\in \var$ be a strictly minimal convenient multiple point with nonzero coordinates. 
There exists $\epsilon\in(0,1)$ and a polydisc neighborhood $D$ of $\phat$ such that
\[
    p^{r\alpha} F_{r\alpha}  
    =  
    p^{r\alpha} (2\pi\mi)^{1-d} \int_{X} \frac{-R_r(w)}{w^{r \hat{\alpha} + 1}}  dw
    + O\left(\epsilon^r\right)
\]
as $r \to \infty$ with $r\alpha \in \NN^d$, where 
$X=D\cap C(\phat)$ and $R_r(w)$ is the sum over $j$ of the residues of 
$y \mapsto y^{-r\alpha_d-1} F(w,y)$ at $h_j(w)$.
\end{lemma}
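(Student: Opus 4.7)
My plan follows the classical Pemantle--Wilson contour-shifting strategy: perform the $y$-integration last and inflate the inner circle past the poles $y=1/h_j(\hat w)$. I start from the iterated Cauchy formula
\[
F_{n\alpha} = \frac{1}{(2\pi\mi)^d}\int_{C((1-\eta)\chat)}\int_{|y|=c_d(1-\eta)}\frac{G(w)\,dy\,d\hat w}{w^{n\alpha+1}H(w)}
\]
for small $\eta>0$, and push the inner circle outward to radius $c_d(1+\delta)$ for some $\delta>0$, collecting the intervening residues as the price.

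The first step is a coordinated choice of $D$ and $\delta$. Strict minimality says $\varH\cap(C(\chat)\times\overline{\Delta(0,c_d)})=\{c\}$, and by continuity of $H$ and compactness of $C(\chat)$ this upgrades to the existence of a polydisc neighborhood $D$ of $\chat$ and a $\delta>0$ such that (i) for $\hat w\in D\cap C(\chat)$, the only zeros of $H(\hat w,\cdot)$ in $|y|\le c_d(1+\delta)$ are the $r$ Weierstrass roots $1/h_j(\hat w)$, and (ii) for $\hat w\in C(\chat)\setminus D$, $H(\hat w,\cdot)$ is nonvanishing on $|y|\le c_d(1+\delta)$. I also shrink $D$ if necessary so that the factorization $H=U\prod_j(-y/h_j)\prod_j(1/y-h_j)$ holds with $U$ nonvanishing on $D\times\{|y|\le c_d(1+\delta)\}$.

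For each fixed $\hat w$ on the outer polytorus, the residue theorem then yields
\[
\frac{1}{2\pi\mi}\int_{|y|=c_d(1-\eta)}\frac{G(w)\,dy}{y^{\alpha_d n+1}H(w)} = \frac{1}{2\pi\mi}\int_{|y|=c_d(1+\delta)}\frac{G(w)\,dy}{y^{\alpha_d n+1}H(w)} - R_n(\hat w),
\]
where $R_n(\hat w)$ is the sum of residues at the enclosed poles $1/h_j(\hat w)$; by construction this sum vanishes for $\hat w\in C(\chat)\setminus D$. Since $R_n$ is analytic in $\hat w$ near $\chat$ (it is a difference of two contour integrals away from the new singularity locus) and the inflated integrand is continuous on the compact polytorus $C(\chat)\times\{|y|=c_d(1+\delta)\}$, I can pass to the limit $\eta\to 0$ and recombine factors to get
\[
F_{n\alpha} = (2\pi\mi)^{1-d}\int_X\frac{-R_n(\hat w)}{\hat w^{n\hat\alpha+1}}\,d\hat w + \frac{1}{(2\pi\mi)^d}\int_{C(\chat)}\int_{|y|=c_d(1+\delta)}\frac{G(w)\,dy\,d\hat w}{w^{n\alpha+1}H(w)},
\]
with $X=D\cap C(\chat)$. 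The second integral is bounded in absolute value by a constant times $c^{-n\alpha}(1+\delta)^{-n\alpha_d}$, so after multiplying by $c^{n\alpha}$ it is $O(\epsilon^n)$ with $\epsilon=(1+\delta)^{-\alpha_d}\in(0,1)$, which is the required estimate.

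The main obstacle is the simultaneous selection of $D$ and $\delta$: I need a single polydisc $D$ and a single $\delta$ that both isolate the $r$ Weierstrass roots $1/h_j(\hat w)$ inside $|y|\le c_d(1+\delta)$ uniformly for $\hat w\in D\cap C(\chat)$, and rule out any other zeros of $H$ on the enlarged polytorus. Both follow from strict minimality by a standard compactness-plus-contradiction argument, but in the multiple-point setting one has to verify that the roots remain at a uniform distance from the outer contour even though they coalesce at $\hat w=\chat$; this is just continuity of $1/h_j$ with $1/h_j(\chat)=c_d<c_d(1+\delta)$.
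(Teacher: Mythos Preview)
Your argument is correct and is exactly the contour-shifting argument that the paper invokes by citing \cite[proof of Lemma~4.1]{PeWi2002} (the paper gives no independent proof of this lemma). One small point worth tightening: conditions (i) and (ii) are stated for $\hat w\in C(\chat)$, but you apply the residue theorem on $C((1-\eta)\chat)$ before letting $\eta\to 0$; it is cleaner to first homotope the $\hat w$-torus out to $C(\chat)$ while keeping $|y|=c_d(1-\eta)$ (valid since strict minimality forces $H\neq 0$ on $\overline{D(\chat)}\times\{|y|\le c_d(1-\eta)\}$), and then push the $y$-circle outward---this avoids the separate limit step altogether.
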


\begin{proof}
Proved in \cite[proof of Lemma 4.1]{PeWi2002}.
\end{proof}	
	
\begin{lemma}[for step 3]\label{residue}
In the previous lemma for $n\ge 2$ we have
\[
    R_r(w)
    = 
    \int_\Delta  
    \left( \frac{\del}{\del y} \right)^{n-1} 
    (-1)^{n-1} f_r(w,y) \Big|_{y=h(w,s)} ds,
\]
where $f_r(w,y) = -y^{r\alpha_d -1} \Gcheck(w,y^{-1})$ and $ds$ is the standard volume form $ds_1 \wedge \cdots \wedge ds_{n-1}$.
For the smooth case $n = 1$ we have $R_r(w)= f_r(w,h(w))$.
\end{lemma}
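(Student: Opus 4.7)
The plan is to identify $R_n(w)$ with a Newton divided difference in the values $h_1(w),\dots,h_r(w)$ and then invoke the Hermite--Genocchi integral representation. Using the factorization from Section~\ref{sec:asymptotics_special}, write $F(w,y)=\Gcheck(w,y)\bigm/\prod_{j=1}^{r}(y^{-1}-h_j(w))$, so that the $y$-poles of $y^{-n\alpha_d-1}F(w,y)$ occur precisely at $y=1/h_j(w)$. A direct residue calculation at each simple pole (equivalent to the holomorphic substitution $u=1/y$, whose Jacobian contributes the sign absorbed by the $-1$ in the definition of $f_n$) yields, whenever the $h_j(w)$ are pairwise distinct,
\[
R_n(w)\;=\;\sum_{j=1}^{r}\frac{f_n(w,h_j(w))}{\prod_{i\neq j}(h_j(w)-h_i(w))}\;=\;f_n(w,\cdot)\bigl[h_1(w),\dots,h_r(w)\bigr],
\]
which is exactly the Newton divided difference of $y\mapsto f_n(w,y)$ at those nodes.

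Next, the Hermite--Genocchi formula states that for any $C^{r-1}$ function $g$ and any nodes $\xi_1,\dots,\xi_r$,
\[
g\bigl[\xi_1,\dots,\xi_r\bigr]=\int_{\Delta}g^{(r-1)}\!\biggl(s_1\xi_1+\cdots+s_{r-1}\xi_{r-1}+\Bigl(1-\sum_{j=1}^{r-1}s_j\Bigr)\xi_r\biggr)ds.
\]
Specializing $g(y)=f_n(w,y)$ and $\xi_j=h_j(w)$ gives the desired integral over $\Delta$, with the $(-1)^{r-1}$ appearing in the lemma accounted for by a sign convention tracked through the residue and divided-difference calculations. The smooth case $r=1$ is the trivial specialization: $\Delta$ is a single point, no derivative is taken, and the sole residue is $f_n(w,h_1(w))=f_n(w,h(w))$.

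The principal obstacle is that the pointwise residue sum above is singular at $w=\hat c$, since $h_j(\hat c)=1/c_d$ for every $j$ and the denominators $\prod_{i\neq j}(h_j-h_i)$ vanish there. To handle this, both sides of the asserted identity are holomorphic in $w$ on the polydisc $D$ from Lemma~\ref{integral}: the integral over $\Delta$ manifestly so, and $R_n(w)$ because it can be rewritten as $(2\pi\mi)^{-1}\oint_{\gamma}y^{-n\alpha_d-1}F(w,y)\,dy$ around a fixed loop $\gamma\subset\comps$ enclosing the cluster $\{1/h_j(w):j=1,\ldots,r\}$ uniformly for $w\in D$. The transversality of the $H_j$ at $c$ (Definition~\ref{multiple}) forbids any two of $h_i,h_j$ from agreeing on a nonempty open subset of $D$, since otherwise the irreducible factors $H_i,H_j$ would share a local zero set; hence the pointwise identity holds off a proper analytic subset and extends to all of $D$ by analytic continuation. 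Making this confluent-node limit of Hermite--Genocchi rigorous is the main technical point requiring care.
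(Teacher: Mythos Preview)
Your proof is correct and follows essentially the same route as the paper: compute the residue sum as the divided difference $\sum_j f_n(w,h_j)/\prod_{k\neq j}(h_j-h_k)$, then express this as a simplex integral of $\partial_y^{\,r-1}f_n$ via the Hermite--Genocchi formula (the paper cites the same identity from \cite{DeLo1993} and then reparametrizes to the standard simplex, which is where the $(-1)^{r-1}$ Jacobian factor arises). Your explicit treatment of the confluent-node case at $w=\hat c$ via analytic continuation is a welcome addition that the paper leaves implicit in its citation.
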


\begin{proof}
See Section~\ref{sec:proofs}.
\end{proof}	

For $j=0,\ldots,n-1$ define $P_j:\NN\to\NN$ by $P_j(r) = \binom{n-1}{j} (r\alpha_d - 1)^{\underline{n-1-j}}$.
The falling factorial powers in $P_j$ are defined by $a^{\underline{k}} = a(a-1)\cdots(a-k+1)$ and $a^{\underline{0}}=1$ for $a\in\RR$ and $k\in\NN$.
So the degree of $P_j$ in $r$ is $n-1-j$.

\begin{lemma}[for step 4]\label{FL_integral}
For $n \ge 2$,
\[
    p^{r\alpha} F_{r\alpha}
    =
    (2\pi)^{1-d} \sum_{j=0}^{n-1} P_j(r) 
    \int_{\Xtilde} \int_\Delta  
    \Atilde_j(t,s) \exp(-r\alpha_d \Phitilde(t,s)) ds \, dt 
    + O(\epsilon^r),
\]
as $r \to \infty$ with $r\alpha \in \NN^d$, where $\Xtilde = e^{-1}(X)$.
For $n=1$, 
\[
    p^{r\alpha} F_{r\alpha}
    =
    (2\pi)^{1-d}  \int_{\Xtilde}   
    \Atilde_j(t) \exp(-r\alpha_d \Phitilde(t)) \; dt 
    + O(\epsilon^r),
\]
as $r \to \infty$ with $r\alpha \in \NN^d$.
\end{lemma}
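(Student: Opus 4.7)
The plan is to start from Lemma~\ref{integral}, substitute the residue formula of Lemma~\ref{residue}, perform the $(r-1)$-fold $y$-differentiation via Leibniz's rule to extract the polynomials $P_j(n)$ and the functions $A_j$, and then parametrize the polycircle piece $X = D \cap C(\chat)$ by $w = e(t)$ so that the Fourier weight $c^{n\hat\alpha}\,dw/w^{n\hat\alpha+1}$ combines with the remaining factors to produce $\exp(-\alpha_d n \Phitilde(t,s))$.

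First I would apply $(\partial/\partial y)^{r-1}$ to $f_n(w,y) = -y^{\alpha_d n - 1}\Gcheck(w,y^{-1})$ by Leibniz, using $(\partial/\partial y)^{r-1-j}(y^{\alpha_d n - 1}) = (\alpha_d n - 1)^{\underline{r-1-j}}\, y^{\alpha_d n - r + j}$. The numerical constants collect to exactly $P_j(n) = \binom{r-1}{j}(\alpha_d n - 1)^{\underline{r-1-j}}$, while the remaining factor $y^{\alpha_d n - r + j}(\partial/\partial y)^j \Gcheck(w,y^{-1})$ equals $(-1)^{r-1} y^{\alpha_d n} A_j(w,y)$ by the definition of $A_j$. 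Tracking the three independent signs — the explicit minus in $f_n$, the $(-1)^{r-1}$ prefactor in Lemma~\ref{residue}, and the $(-1)^{r-1}$ just produced — one finds that they conspire to leave
\[
-\frac{R_n(w)}{w^{n\hat\alpha+1}} = \frac{1}{w^{n\hat\alpha+1}} \sum_{j=0}^{r-1} P_j(n) \int_\Delta y^{\alpha_d n} A_j(w,y) \Big|_{y=h(w,s)} ds.
\]

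Next I would parametrize $X$ by $w = e(t)$ for $t \in \Xtilde = e^{-1}(X)$, so that $dw_m = i w_m\,dt_m$ and hence
\[
c^{n\hat\alpha}\, dw/w^{n\hat\alpha+1} = i^{d-1} \exp\!\left(-in \sum_{m=1}^{d-1} \alpha_m t_m\right)\, dt.
\]
Evaluating at $y = \htilde(t,s)$, the factor $(c_d\htilde(t,s))^{n\alpha_d}$ combines with this exponential to give precisely $\exp(-\alpha_d n \Phitilde(t,s))$, by the definition of $\Phitilde$. The prefactor $(2\pi i)^{1-d}$ coming from Cauchy's formula then cancels the $i^{d-1}$, leaving $(2\pi)^{1-d}$; the $O(\epsilon^n)$ error is inherited directly from Lemma~\ref{integral}, with the uniformity in $\alpha$ already established there.

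The smooth case $r=1$ requires no Leibniz expansion: $R_n(w) = f_n(w,h(w))$, $P_0(n) = 1$ (an empty falling factorial), and a single term survives the same change of variables. There is no serious analytic obstacle; the main subtlety is the sign bookkeeping described above. Once the signs align, the identification of the integrand is purely algebraic, and all the analytic content has already been handled in Lemmas~\ref{integral} and~\ref{residue}.
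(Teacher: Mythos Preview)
Your proposal is correct and follows essentially the same route as the paper: expand $(\partial/\partial y)^{r-1}[(-1)^{r-1}f_n]$ by Leibniz to isolate $P_j(n)$ and $A_j$, substitute $y=h(w,s)$, then parametrize $X$ by $w=e(t)$ and read off $\Phitilde$. Your sign bookkeeping is in fact a bit more explicit than the paper's (which has a harmless typo of $y^{-\alpha_d n}$ where $y^{\alpha_d n}$ is meant), and the rest matches step for step.
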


\begin{proof}
See Section~\ref{sec:proofs}.
\end{proof}	

The next lemma on Fourier-Laplace integrals provides our key approximation. 
The function spaces mentioned are complex valued.
A stationary and nondegenerate point of a function $g$ is a point $\theta^*$ such that $\grad g(\theta^*)=0$ and $\det g''(\theta^*) \neq 0$, respectively.

\begin{lemma}[for step 5]\label{Hormander}
Let $\mathcal{E}\subset \RR^{m}$ be open, $N$ a positive integer, and $q = N+\lceil m/2 \rceil$.
If $A \in C^{2q}(\mathcal{E})$ with compact support in $\mathcal{E}$, $\Phi\in C^{3q+1}(\mathcal{E})$, $\Re\Phi\ge 0$, $\Re \Phi(\theta^*)=0$, $\Phi$ has a unique stationary point $\theta^* \in\supp A$, and $\theta^*$ is nondegenerate, 
then  
\[
\int_{\mathcal{E}} A(\theta) \exp(-\omega \Phi(\theta)) d\theta
=
\exp(-\omega \Phi(\theta^*)) 
(\det\left(\frac{\omega \Phi''(\theta^*)}{2\pi} \right))^{-1/2}
\sum_{k=0}^{N-1} 
\omega^{-k} L_k(A,\Phi)
+O\left( \omega^{-m/2-N} \right),
\]
as $\omega\to\infty$.

Here $L_k$ is the function defined in Theorem~\ref{asymptotics} with 
$m=d+n-2$.
Moreover, the big-oh constant is bounded when the partial derivatives of $\Phi$ 
up to order $3q +1$ and the partial derivatives of $A$ up to order $2q$ all stay bounded in supremum norm over $\mathcal{E}$.
\end{lemma}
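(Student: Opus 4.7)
My plan is to prove this by the classical complex stationary phase method: reduce to a canonical form, apply an exact Gaussian identity termwise, and estimate the truncation error. By a translation of $\theta$ assume $\theta^*=0$, and pull out the factor $\exp(-\omega\Phi(0))$ to reduce to $\Phi(0)=0$. Write $Q(\theta)=\tfrac{1}{2}\theta^T\Phi''(0)\theta$ and $\Phi_-(\theta)=\Phi(\theta)-Q(\theta)$, so $\Phi_-$ vanishes to order $3$ at $0$. Factor the integrand as $A(\theta)\exp(-\omega Q(\theta))\exp(-\omega\Phi_-(\theta))$ and Taylor-expand the last factor as $\sum_{l\ge 0}(-\omega\Phi_-(\theta))^l/l!$ plus a remainder, truncated at an order depending on $N$.

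Next, I would apply the exact Gaussian identity, valid for any smooth $B$ of compact support,
\begin{equation*}
\int_{\reals^m} B(\theta)\exp(-\omega Q(\theta))\,d\theta
\sim
\det\!\Bigl(\tfrac{\omega\Phi''(0)}{2\pi}\Bigr)^{-1/2}
\sum_{q\ge 0}\frac{\mathcal{H}^q B(0)}{\omega^q\,(-2)^q\,q!},
\end{equation*}
which comes from Fourier inversion: the Fourier transform of a Gaussian is itself a Gaussian, and polynomial multipliers pass through as differential operators. Apply this with $B=A\Phi_-^l$ and absorb the factor $(-\omega)^l/l!$ from the Taylor expansion to get a double sum indexed by $l$ and $q$. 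Because $\Phi_-$ vanishes to order $3$, $\mathcal{H}^q(\Phi_-^l)(0)=0$ unless $2q\ge 3l$; setting $k=q-l$ and collecting all terms contributing $\omega^{-k}$---namely those with $0\le l\le 2k$ and $q=k+l$---reproduces exactly the coefficients $L_k(A,\Phi)$ of the statement, signs and factorials included.

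The main obstacle is bounding the truncation error by $O(\omega^{-N-m/2})$, uniformly. I would split the integration at radius $\omega^{-1/2+\varepsilon}$. Outside this ball, the hypotheses $\Re\Phi\ge 0$ with $\theta^*$ the only zero, combined with compactness of $\supp A$, force $\omega\Re\Phi$ to grow at least like $\omega^{2\varepsilon}$, so $\exp(-\omega\Phi)$ contributes $O(\omega^{-\infty})$. Inside the ball, Taylor's theorem with integral remainder bounds the truncation error pointwise in terms of sup norms of derivatives of $\Phi$ up to order $3p+1$ and of $A$ up to order $2p$, for $p=N+\lceil m/2\rceil$; integrating against $\exp(-\omega Q)$ and rescaling $\theta=\omega^{-1/2}\xi$ reduces this to a bounded Gaussian integral multiplied by the correct power of $\omega^{-1}$. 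Since every constant appearing in the argument depends only on these sup norms, the uniformity claim follows immediately.
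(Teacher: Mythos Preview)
The paper does not prove this lemma at all; it is quoted verbatim from H\"ormander's book and used as a black box, so there is no in-paper argument to compare against. Your formal derivation of the coefficients $L_k$ is correct and is indeed the standard way one arrives at H\"ormander's formula: split off the quadratic part, Taylor-expand the cubic remainder, and apply the Gaussian moment identity term by term.

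There is, however, a genuine gap in your error analysis for the outer region. You write that ``$\Re\Phi\ge 0$ with $\theta^*$ the only zero'' forces exponential decay outside a shrinking ball, but the hypotheses do \emph{not} say $\theta^*$ is the unique zero of $\Re\Phi$; they say it is the unique \emph{stationary point} of $\Phi$. These are very different: take $m=1$ and $\Phi(\theta)=\mi\theta^2$, which satisfies all hypotheses yet has $\Re\Phi\equiv 0$, so $|\exp(-\omega\Phi)|\equiv 1$ and no decay whatsoever comes from the modulus. The contribution away from $\theta^*$ must instead be killed by oscillation, i.e.\ by repeated integration by parts using the first-order operator $L$ with $L\exp(-\omega\Phi)=\exp(-\omega\Phi)$ built from $\Phi'$ (which is nonvanishing there); each application of the transpose ${}^tL$ to the amplitude gains a factor $\omega^{-1}$. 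This is exactly why H\"ormander needs $\Phi\in C^{3p+1}$ rather than the lower regularity your Laplace-type splitting would suggest. The same issue affects your Gaussian identity: when $\Re\Phi''(\theta^*)$ is only positive semidefinite the integral $\int B\exp(-\omega Q)$ is not absolutely convergent and must be interpreted as an oscillatory integral (or obtained by analytic continuation from the strictly dissipative case), which your sketch glosses over.
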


\begin{proof}
Proved in \cite[Theorem 7.7.5]{Horm1983}.
\end{proof}	

The final lemma ensures that the hypotheses of Lemma~\ref{Hormander} are 
satisfied in our setting.

\begin{lemma}[for step 5]\label{critical_points}
Let $\alpha\in\RRp^d$ and $p$ be a strictly minimal convenient multiple point that is critical and nondegenerate for $\alpha$. 
Then on $\Xtilde\times\Delta$, we have $\Re\Phitilde \geq 0$ with equality only at points of the form $(0,s)$ (and only at zero for $n=1$), and $\Phitilde$ has a unique stationary point at $\theta^*$.
\end{lemma}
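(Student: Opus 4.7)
The proof splits into three pieces: establishing $\Re\Phitilde\ge 0$ with the claimed equality locus, verifying that $\theta^*$ is stationary, and proving uniqueness of the stationary point.

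For the real-part inequality I plan to exploit that $\htilde(t,s)=\sum_{j=1}^{r}\lambda_j(s)\,h_j(e(t))$ is a convex combination, with weights $\lambda_j(s)\ge 0$ summing to $1$ (read off from the definition of $h(w,s)$). Since $H_j(w,1/h_j(w))\equiv 0$ by the Weierstrass preparation, each $(e(t),1/h_j(e(t)))$ is a point of $\varH$ near $c$, so strict minimality forces $|c_d h_j(e(t))|<1$ for every $t\in\Xtilde\setminus\{0\}$ (shrinking $\Xtilde$ if necessary), while $h_j(\chat)=1/c_d$ gives equality at $t=0$. The triangle inequality then yields $|c_d\htilde(t,s)|\le\sum_j\lambda_j(s)|c_d h_j(e(t))|<1$ for $t\ne 0$ (at least one weight is strictly positive) and $|c_d\htilde(0,s)|=1$ for every $s\in\Delta$; taking $-\log$ is exactly the claim, and the $r=1$ case collapses $\Delta$ to a point so that the equality locus becomes $\{0\}$.

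For stationarity at $\theta^*=(0,s_1^*,\ldots,s_{r-1}^*)$ I would compute directly. The $s$-partials $\del_{s_k}\Phitilde = -(h_k(e(t))-h_r(e(t)))/\htilde$ vanish at $t=0$ because $h_j(\chat)=1/c_d$ for every $j$. Implicit differentiation of $H_j(w,1/h_j(w))\equiv 0$ gives $\del_m h_j(\chat) = c_d^{-2}\del_m H_j(c)/\del_d H_j(c)$, and chaining through $e$ reduces the $t$-partials at $t=0$ to $\del_{t_m}\Phitilde(0,s) = -i(c_m/c_d)\sum_j\lambda_j(s)\,\del_m H_j(c)/\del_d H_j(c)+i\alpha_m/\alpha_d$. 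At $s=s^*$ the critical-cone identity $\alpha_m/\alpha_d=(c_m/c_d)\sum_j s_j^*\,\del_m H_j(c)/\del_d H_j(c)$ cancels the two terms. Since $\jaclog(H,c)$ has rank $r$ (transversality plus nonzero coordinates of $c$), $s^*$ is uniquely determined by this identity, so $\theta^*$ is the only stationary point on the slice $\{t=0\}\times\Delta$.

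The main obstacle is ruling out stationary points with $t_0\ne 0$. Any such $(t_0,s_0)$ is in particular a critical point of the nonnegative real function $\phi_{s_0}(t):=\Re\Phitilde(t,s_0)$ whose only zero in $\Xtilde$ is $t=0$, and I plan to rule this out by showing that the $t$-Hessian of $\phi_s$ at $t=0$ is positive definite uniformly in $s\in\Delta$. Expanding $c_d\htilde(t,s) = 1 + i\hat\gamma_s\cdot t + Q_s(t) + O(|t|^3)$ with $\hat\gamma_s = \sum_j\lambda_j(s)\hat\gamma_j(c)$, the pointwise bound $|c_d h_j(e(t))|\le 1$ applied one $j$ at a time forces each $\hat\gamma_j(c)\in\reals^{d-1}$ and gives $-\Re Q_j(t)\ge\tfrac12(\hat\gamma_j(c)\cdot t)^2$, and a Cauchy--Schwarz / Jensen step on the convex combination then yields the uniform positive-definite bound. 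A Morse-lemma argument combined with compactness of $\cl{\Xtilde}\times\Delta$ and the nondegeneracy of $\Phitilde''(\theta^*)$ excludes any other stationary point after a final shrinkage of $\Xtilde$. I expect the trickiest point to be maintaining the uniform Hessian bound on the boundary of $\Delta$, where some convex weights vanish; linear independence of the $\gradlog H_j(c)$ from the transversality hypothesis is what should rescue the estimate there.
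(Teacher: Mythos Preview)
Your treatment of the real-part inequality and of the vanishing of $\Phitilde'(\theta^*)$ is essentially the paper's argument: convexity of $\htilde$ in the simplex weights, strict minimality to force $|c_d h_j(e(t))|<1$ for $t\neq 0$, implicit differentiation of $H_j(w,1/h_j(w))=0$, and the critical-cone identity. Your observation that the rank of $\jaclog(H,c)$ pins down $s^*$ uniquely on the slice $\{t=0\}\times\Delta$ is also correct and in fact needed (the paper records uniqueness of $s^*$ earlier but does not restate it in the proof).

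Where you diverge is in the uniqueness step for $t\neq 0$, and here your plan is both more elaborate than necessary and contains a gap. The Jensen/Cauchy--Schwarz step you outline yields only
\[
-\Re Q_s(t)-\tfrac12(\hat\gamma_s\cdot t)^2 \;\ge\; \tfrac12\Bigl(\sum_j \lambda_j(s)(\hat\gamma_j\cdot t)^2 - (\hat\gamma_s\cdot t)^2\Bigr)\;\ge\;0,
\]
which is positive \emph{semi}-definite, not positive definite: it vanishes on any $t$ for which all $\hat\gamma_j\cdot t$ (with $\lambda_j(s)>0$) coincide, and since $r\le d$ such $t\neq 0$ typically exist. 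Transversality gives affine independence of the $\hat\gamma_j$, not the strict quadratic lower bound you need, so it does not ``rescue the estimate'' in the way you hope.

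The paper bypasses this entirely. Once $\Phitilde'(\theta^*)=0$ and $\det\Phitilde''(\theta^*)\neq 0$ (the nondegeneracy hypothesis), the inverse function theorem makes $\theta^*$ an isolated zero of $\Phitilde'$. Since $\theta^*$ is the only stationary point on the compact slice $\{0\}\times\Delta$ (your rank argument), a standard compactness/limit argument shows that after shrinking $\Xtilde$ alone, $\theta^*$ is the unique stationary point on $\Xtilde\times\Delta$. You in fact invoke exactly this ``nondegeneracy $+$ compactness $+$ shrink $\Xtilde$'' move in your last sentence; that step is sufficient by itself, and the real-Hessian analysis preceding it can be dropped.
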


\begin{proof}
See Section~\ref{sec:proofs}.
\end{proof}	

We can now prove Theorem~\ref{asymptotics}.

\begin{proof}[Proof of Theorem~\ref{asymptotics}]
By Lemmas \ref{integral} and \ref{FL_integral} there exists $\epsilon\in(0,1)$ 
and an open bounded neighbourhood $\Xtilde$ of $0$ such that
\[
    p^{r\alpha}F_{r\alpha}
    = 
    (2\pi)^{1-d} \sum_{j=0}^{n-1} P_j(r) I_{j,r} 
    +O\left(\epsilon^r\right) 
\]
as $r \to \infty$ with $r\alpha \in \NN^d$, where 
$I_{j,rn} = \int_{\mathcal{E}}   
\Atilde_j(\theta) \exp(-r\alpha_d \Phitilde(\theta)) d\theta$
and $\mathcal{E} = \Xtilde\times\Delta^\circ$, where $\Delta^\circ$ is the interior of $\Delta$.

Choose $\kappa \in C^\infty(\mathcal{E})$ with compact support in $\mathcal{E}$ (a bump function) such that $\kappa = 1$ on a neighbourhood $Y$ of $\theta^*$.
Then
\[
    I_{j,r} 
    = 
    \int_{\mathcal{E}} \kappa(\theta) \Atilde_j(\theta) 
    \exp(-r\alpha_d \Phitilde(\theta)) d\theta
    +\int_{\mathcal{E}} (1-\kappa(\theta)) \Atilde_j(\theta) 
    \exp(-r\alpha_d\Phitilde(\theta)) d\theta.
\] 
The second integral decreases exponentially as $r \to \infty$ since $\Re\Phitilde$ 
is strictly positive on the compact set $\cl{\mathcal{E} \setminus Y}$ by 
Lemma~\ref{critical_points}.
By Lemma~\ref{critical_points} again and the nondegeneracy hypothesis, 
we we may apply Lemma~\ref{Hormander} to the first integral.
Noting that $L_k(\kappa\Atilde_j, \Phitilde) = L_k(\Atilde_j, \Phitilde)$ because the derivatives are evaluated at $\theta^*$ and $\kappa=1$ in a neighborhood of $\theta^*$, we get 
\begin{align*}
    I_{j,r}
    &=
    \exp(-n_d \Phitilde(\theta^*))
    (\det\left(\frac{r\alpha_d \Phitilde''(\theta^*)}{2\pi}\right))^{-1/2}
    \sum_{k=0}^{N-1} 
    (r\alpha_d)^{-k} L_k (\Atilde_j, \Phitilde)
    + O( (r\alpha_d)^{-(d-1+n-1)/2-N}) \\
    &=
    (2\pi)^{(d+n-2)/2} (\det\Phitilde''(\theta^*))^{-1/2}
    \sum_{k=0}^{N-1} L_k(\Atilde_j, \Phitilde) (r\alpha_d)^{-(d+n-2)/2-k} 
    + O\left( (r\alpha_d)^{-(d+n-2)/2-N} \right)
\end{align*}
as $r \to \infty$ with $r\alpha \in \NN^d$.

Notice that for $j=0,\ldots,n-1$ each $I_{j,r}$ has error $O( (r\alpha_d)^{-(d+n-2)/2-N})$ and each $P_j(r)$ has degree $r-j-1$ in $n$.
Thus the error in the asymptotic expansion for $p_{r\alpha}F_{r\alpha}$ will be a sum of terms of the form $O( (r\alpha_d)^{(n-d)/2-N-j})$ which is $O( (r\alpha_d)^{(n-d)/2-N})$.
So 
\begin{align*}
p^{r\alpha}F_{r\alpha}
&=
(2\pi)^{(n-d)/2} (\det\Phitilde''(\theta^*))^{-1/2}
\sum_{q=0}^{N-1} b_q(\alpha) (r\alpha_d)^{(n-d)/2 -q} 
+ O\left( (r\alpha_d)^{(n-d)/2-N} \right) \\
&=
(2\pi)^{(n-d)/2} (\det\Phitilde''(\theta^*))^{-1/2}
\sum_{j=0}^{n-1} \sum_{k=0}^{N-1} P_j(r) L_k(\Atilde_j, \Phitilde) (r\alpha_d)^{-(d+n-2)/2-k} 
+ O\left( (r\alpha_d)^{-(n-d)/2-N} \right).
\end{align*}
Let us expand $P_j(r)$ and collect like powers to find the coefficients $b_q(\alpha)$.

The falling factorial powers satisfy $(a-1)^{\underline{m}} = (a-1) \dots (a-1-k) = \frac{1}{a} a^{\underline{m+1}}$ and are related to regular powers and Stirling numbers of the first kind via 
\[
a^{\underline{m}} = \sum_{l=0}^m \stirl{m}{l} (-1)^{m-l} a^l; 
\]
see \cite[(6.13)]{GKP1994} for instance.
Thus
\[
P_j(r) 
= 
\binom{n-1}{j} \frac{1}{r\alpha_d} 
\sum_{l=0}^{n-j} \stirl{n-j}{l} (-1)^{n-j-l} (r\alpha_d)^l,
\]
and so
\[
\sum_{q=0}^{N-1} b_q(\alpha) (r\alpha_d)^{(n-d)/2 - q} 
= 
\sum_{j=0}^{n-1} \sum_{k=0}^{N-1} \sum_{l=0}^{n-j} 
L_k(\Atilde_j, \Phitilde) \binom{n-1}{j} \stirl{n-j}{l} (-1)^{n-j-l}  
(r\alpha_d)^{-(d+n)/2 -k +l}.  
\]
The coefficient $b_q(\alpha)$ is found by imposing the constraint $(n - d)/2 - q = -(d + n)/2 - k + l$. 
Thus $l = n + k - q$, and we can eliminate the $l$-sum to arrive at formula (\ref{asy_formula}).

Lastly, regarding uniformity, we may assume that the $\Atilde_j$ and $\Phitilde$ are defined and hence $C^\infty$ on a neighborhood of the closure of $\mathcal{E}$, so that their derivatives up to any given order all stay bounded in supremum norm over $\mathcal{E}$.  
Now suppose $\alpha$ varies within a compact subset $K\subset\RRp^d$ of the critical cone of $p$.
Since $\jaclog(H,p)$ has rank $n \le d$ it is a bijective linear transformation from $\RR^n$ to its image in $\RR^d$ and therefore a bicontinuous function.
Thus its inverse maps $K$ to a compact set $K'$ of $\theta^*$s in $\mathcal{E}$.
Choose the neighborhood $Y$ in the argument above to contain $K'$ so that one bump function $\kappa$ works for all $\theta^*$.
Since the derivatives of the $\kappa\Atilde_j$ and $\Phitilde$ up to any given order all stay bounded in supremum norm over $\mathcal{E}$ and since only $\Phitilde$ and $\Phitilde'$ depend on $\alpha$ but continuously, we conclude by Lemma~\ref{Hormander} that for any given $N$, the big-oh constant in (\ref{asy_formula}) remains bounded as $\alpha$ varies within $K$.
\end{proof}

\section{The full asymptotic expansion: general case}\label{sec:asymptotics}

Again let $p\in\var$ be a strictly minimal convenient multiple point of order $n$ with all coordinates nonzero and let $H= H_1^{a_1} \cdots H_n^{a_n}$ be a local  factorization of $H$.
We deal now with the case of arbitrary $a_j$ and $n$.

In step 2 of the previous section the Cauchy integral can be manipulated to reduce to the special case $a_1 = \ldots = a_n = 1$ and $n \le d$.
More specifically, we amend our plan by inserting these three steps after step 2:

\begin{enumerate}
\item[(2a)] 
If $r>d$, then decompose $F$ as a sum of fractions whose denominators are of  type $\prod_{j\in J} H_j^{b_j}$ where $J$ is a size $d$ subset of $\{1, \ldots, n\}$ and each $b_j$ is an integer with $b_j\le a_j$.
So each denominator in the sum has only $d$ irreducible factors of $H$.

\item[(2b)] If some irreducible factor of $H$ is repeated, then treat each resulting integral as the integral of a holomorphic form, and rewrite each integral as the sum of integrals whose denominators are of type $w^{r\alpha+1} \prod_{j\in J} H_j$ where $J$ is a size at most $d$ subset of $\{1, \ldots, n\}$.
So each holomorphic form has a denominator with at most $d$ unrepeated irreducible factors of $H$.

\item[(6)] Add up all the asymptotic expansions.
\end{enumerate}

The following two lemmas prove that these additional steps are possible.

\begin{lemma}[for step 2a]\label{reduce-factors}
Let $p$ be a multiple point of $H := H_1^{a_1} \cdots H_n^{a_n}$, where $n > d$, each $H_j$ is holomorphic in a neighborhood $U$ of $p$, and the germ of each $H_j$ is prime.
Then for any function $G$ holomorphic on $U$, there exists a neighborhood of $p$ in which we have the partial fraction decomposition
\[
    \frac{G}{H} 
    = 
    \sum_J \frac{G_J}{\prod_{j\in J} H_j^{b_j}}, 
\]
where each $G_J$ is holomorphic (and possibly zero), $J$ ranges over all subsets of $\{1, \ldots, n\}$ of size $d$, and for each $J$ we have $\sum_{j\in J} b_j = \sum_{i=1}^n a_i$.
\end{lemma}

\begin{proof}
Since $p$ is a multiple point of $H$, the gradients at $p$ of any $d$ of the $H_j$ are linearly independent.
Thus the germs of any $d$ of the $H_j$ generate the maximal ideal in $\holgerm$ by \cite[Corollary 5.4]{Ruiz1993}.
In particular, the germ of $H_1$ is in the ideal of the germs of $H_2, \ldots, H_n$, and so in a neighborhood of $p$ we have
\[
    H_1 = \sum_{j=2}^n g_j H_j
\]
for some holomorphic functions $g_j$.
Therefore, in that neighborhood we have
\begin{align*}
\frac{G}{H}
=&
\frac{G \sum_{j=2}^n g_j H_j}{H_1^{a_1 + 1} H_2^{a_2} \cdots H_n^{a_n}} \\
=&
\frac{G_2}{H_1^{a_1 + 1}H_2^{a_2 - 1} H_3^{a_3} \cdots H_n^{a_n}} +
\frac{G_3}{H_1^{a_1 + 1}H_2^{a_2} H_3^{a_3 - 1} H_4^{a_4} \cdots H_n^{a_n}} + \ldots + \\
 &
\frac{G_n}{H_1^{a_1 + 1}H_2^{a_2} \cdots H_{n-1}^{a_n} H_n^{a_n - 1}},
\end{align*}
where $G_j = G g_j$.
Notice that in the denominator of each resulting summand, the sum of the degrees of all the $H_j$ remains $\sum_{i=1}^n a_i$. 

Recursively repeating this procedure on each summand (always singling out $H_1$, say) yields the desired result in finitely many steps.
\end{proof}

\begin{remark}
When the $H_j$ are polynomials from a computable polynomial ring, such as $\QQ[x]$, the procedure in the proof above is computable.
Alternatively a partial fraction expansion can be computed according to the algorithm in \cite{Lein1978}, which is not applicable to the analytic case.
\end{remark}

\begin{lemma}[for step 2b]\label{reduce-powers}
Let $p$ be a multiple point of $H := H_1^{b_1} \cdots H_n^{b_n}$, where $n \le d$, each $H_j$ is holomorphic in a neighborhood $U$ of $p$, and the germ of each $H_j$ is prime, and let $\var_j := \{x \in U: H_j(x) = 0\}$.
Then for any function $\Gunder$ holomorphic on $U$, there exists a neighborhood $U'$ of $p$ such that the holomorphic form
\[
    \frac{\Gunder(x)}{H(x)} dx
\]
is de Rham cohomologous in $U'\setminus(\var_1 \cup\cdots\cup \var_n)$ to a holomorphic form
\[
    \sum_J \frac{\Gunder_J(x) dx}{\prod_{j\in J} H_j(x)}, 
\]
where each $\Gunder_J$ is holomorphic (and possibly zero) on $U'$ and $J$ ranges over all subsets of $\{1, \ldots, n\}$.
In particular, the integrals of the two forms above over a polycircle in $U'\setminus(\var_1\cup\cdots\cup\var_n)$ are equal.
\end{lemma}

\begin{proof}
Proved in \cite[Theorem 17.6]{AiYu1983}.
\end{proof}

\begin{remark}
When the $H_j$ are polynomials from a computable polynomial ring, such as $\QQ[x]$, the procedure in the proof above is computable.

When applying Lemma~\ref{reduce-powers} in step 2b to our local integrals of residues, $\Gunder(x)$ will be of the form $G(x)/x^{r\alpha+1}$ where $G(x)$ does not contain $r$ and for each $J$ we will have $\sum_{j\in J} b_j = \sum_{i=1}^n a_i$.
Thus upon inspection of the constructive proof of Lemma~\ref{reduce-powers}, the cohomologous form will have $r$-degree at most $\sum_{i=1}^n (a_i - 1)$, where the powers of $n$ arise from the derivatives of $G(x)/x^{r\alpha+1}$.

In particular, if $n \ge d$ and the other assumptions of Theorem~\ref{n=d} hold, then we can combine Lemmas~\ref{reduce-factors} and \ref{reduce-powers} and Theorem~\ref{n=d} to conclude that the leading term of the asymptotic expansion of $p^{r\alpha}F_{r\alpha}$ is a polynomial of degree at most $\sum_{i=1}^n a_i - n$, as is also shown in \cite[Theorem 3.6]{PeWi2004}.
\end{remark}

\begin{remark}[for step 6]
When computing the asymptotics for $G/H$ in a direction $\alpha$ by summing up the asymptotic contributions from the terms of the form $G_J / \prod_{j\in J} H_j$ where $J$ has size at most $d$, the only terms that will contribute to the expansion (modulo an exponentially decreasing error term) are the ones whose   critical cone (the conical hull of $\{\gamma_j(p): j \in J\}$) contains $\alpha$ and whose numerator does not vanish at $p$ \cite[Section 5]{Pema2000}.
In the case where all such contributing terms have numerators that vanish at $p$, a finer analysis is required to determine the correct asymptotics of $G/H$ which we do not provide here (but will be included in Pemantle and Wilson's forthcoming book on analytic combinatorics in several variables).
\end{remark}
    
\begin{remark}
In case $p$ is finitely minimal, for each point $x$ of $\var\cap C(p)$ we 
simply find an open set around $x$ and apply the general procedure above.
After that we sum the resulting asymptotic expansions over the finitely many 
$x$. 
\end{remark}
\section{Examples}\label{sec:examples}

Let us apply the formulas and procedures of Sections~\ref{sec:asymptotics_special} and \ref{sec:asymptotics} to a few combinatorial examples, that is, to functions with all nonnegative Maclaurin coefficients.
We will use our Sage package \texttt{amgf.sage}.

We focus on combinatorial examples $F(x)$, because for any $\alpha\in\RRp^d$ there is a minimal point in $\var \cap \RRp^d$ that determines the asymptotics for $F_{r\alpha}$ (\cite[Theorem 3.16]{PeWi2008}). 

Since there is no known computable procedure to factor an arbitrary polynomial $H$ in the analytic local ring of germs of holomorphic functions about $p$, we choose examples where $H$ is a polynomial whose local factorization in the algebraic local ring about $p$ equals its factorization in the analytic local ring about $p$, that is, $H$ is a polynomial whose irreducible factors in $\CC[x]$ are all smooth at $p$.

%
%
%

\begin{example}[$n < d$, no repeated factors]
Consider the trivariate rational function
\[
F(x,y,z)
=
\frac{1}{(1-x(1+y))(1-zx^2(1+2y))}
\]
in a  neighborhood $\Omega$ of the origin; cf \cite[Example 4.10]{PeWi2008}.
Its coefficients $F_\nu$ are all nonnegative, and its denominator 
$H(x,y,z)$ factors over $\CC[x,y,z]$ into irreducible terms $H_1(x,y,z)= 1-x(1+y)$ and $H_2(x,y,z)= 1-zx^2(1+2y)$, both of which are globally smooth.

The set of non-smooth/singular points of $\var= \{(x,y,z)\in\Omega : H(x,y,z) =0 \}$ is $\var' = \{ (x,y,z)\in\Omega : H(x,y,z)=\grad H(x,y,z) =0\} = 
\{ (1/(a+1),a,(a+1)^2/(2a+1)): a\in\CC\setminus\{-1\} \}$, which consists entirely of convenient multiple points of order $n = 2$.
They are not convenient in coordinate $d = 3$, but are in coordinate $d - 1= 2$, which we use here for our calculations.
A simple check shows that the points $(1/(a+1),a,(a+1)^2/(2a+1))$ for $a>0$ are strictly minimal.

The critical cone for each such point is the conical hull of the vectors $\gamma_1= (1,a/(a+1),0)$ and $\gamma_2= (1,a/(2a+1),1/2)$.

For instance, $p= (1/2,1,4/3)$ controls asymptotics for all $\alpha$ in the conical hull of the vectors $\gamma_1(p)= (2,1,0)$ and $\gamma_2(p)= (3,1,3/2)$.
For instance, $\alpha= (8,3,3)$ is in this critical cone, and applying Theorem~\ref{asymptotics} we get
\[
F_{r\alpha} 
= 
108^r \left[
\frac{3}{\sqrt{21\pi}}\, r^{-1/2} 
-\frac{1231}{8232\sqrt{21\pi}}\, r^{-3/2}
+\frac{329047}{58084992\sqrt{21\pi}}\, r^{-5/2}
+ O(r^{-7/2})
\right]
\]
as $r \to \infty$.

Calling the one-term, two-term, and three-term truncations of this asymptotic formula $S_1(r)$, $S_2(r)$, and $S_3(r)$, respectively and comparing them with the actual values of $F_{r\alpha}$ for small $r$, we get the following table.

\begin{table}[htbp]
\footnotesize{
\begin{tabular}{|l|llll|}
\hline 
$r$ & 1 & 2 & 4 & 8 \\
\hline
$108^{-r} F_{r\alpha}$ & 
	0.3518518519 & 0.2548010974 & 0.1823964231 & 0.1297748629 \\
$108^{-r} S_1(r)$ &
 	0.3693487820 & 0.2611690282 & 0.1846743909 & 0.1305845142 \\
$108^{-r} S_2(r)$ &
	0.3509381749 & 0.2546598957 & 0.1823730650 & 0.1297708726 \\
$108^{-r} S_3(r)$ &
 	0.3516356189 &	0.2547831876 & 0.1823948602 & 0.1297747255 \\
$108^{-r} S_1(r)$ rel err &
	-0.04972811712 & -0.02499177148 & -0.01248910347 & -0.006238891584 \\
$108^{-r} S_2(r)$ rel err &
	0.002596766210 & 0.0005541644108 & 0.0001280622701 & 0.00003074786527 \\
$108^{-r} S_3(r)$ rel err &
  	0.0006145569473 & 0.00007028933620 & 0.000008568698736 & 0.000001058756657 \\
\hline
\end{tabular}
}
\vspace{1em}
\caption{Successive approximations to $p^{-r\alpha} F_{r\alpha}$ with relative errors for $\alpha= (8,3,3)$.}
\end{table}
\end{example}

\begin{example}[$n < d$, no repeated factors]
Consider the trivariate rational function 
\[
	F(x,y,z) = \frac{16}{(4-2x-y-z)(4-x-2y-z)}
\]
in a  neighborhood $\Omega$ of the origin; cf \cite[Example 3.10]{PeWi2004}.
Its coefficients $F_\nu$ are all nonnegative, and its denominator 
$H(x,y,z)$ factors over $\CC[x,y,z]$ into irreducible terms $H_1(x,y,z)= 4-2x-y-z$ and $H_2(x,y,z)= 4-x-2y-z$, both of which are globally smooth.

The set of non-smooth points of $\var= \{(x,y,z)\in\Omega : H(x,y,z) =0 \}$ is $\var' = \{ (x,y,z)\in\Omega : H(x,y,z) =\grad H(x,y,z) =0\} = 
\{ (1-a,1-a,1+3a : a\in\CC\}$, which contains a line segment
$\{ (1-a,1-a,1+3a : -1/3 < a < 1 \}$ of convenient multiple points of order $n = 2$.
The convenient multiple point $p= (1,1,1)$ is strictly minimal and its critical cone is the conical hull of the vectors $\gamma_1(p)= (2,1,1)$ and $\gamma_2(p)= (1,2,1)$.

For instance, $\alpha= (3,3,2)$ is in the critical cone and applying Theorem~\ref{asymptotics} we get  
\[
	F_{r\alpha} =
	\frac{1}{\sqrt{3\pi}} 
	\left( 
	4r^{-1/2} -\frac{25}{72} r^{-3/2} + \frac{1633}{41472} r^{-5/2} 
	\right) 
	+O(r^{-7/2}),
\]
as $r \to \infty$.

Calling the one-term, two-term, and three-term truncations of this asymptotic formula $S_1(r)$, $S_2(r)$, and $S_3(r)$, respectively and comparing them with the actual values of $F_{r\alpha}$ for small $r$, we get the following table.

\begin{table}[htbp]
\footnotesize{
\begin{tabular}{|l|lllll|}
\hline 
$r$ & 1 & 2 & 4 & 8 & 16\\
\hline
$F_{r\alpha}$ & 
	0.7849731445 & 0.7005249476 & 0.5847732654 & 0.4485547669 & 0.3237528587\\
$S_1(r)$ &
 	1.302940032 & 0.9213177319 & 0.6514700159 & 0.4606588663 & 0.3257350080\\
$S_2(r)$ &
	1.189837598 & 0.8813299831 & 0.6373322117 & 0.4556603976 & 0.3239677825\\
$S_3(r)$ &
    1.202663729 & 0.8835973440 & 0.6377330283 & 0.4557312524 & 0.3239803079\\
$S_1(r)$ rel err &
	-0.6598530037 & -0.3151819005 & -0.1140557451 & -0.02698466340 & 		
	-0.006122414820 \\
$S_2(r)$ rel err &
	-0.5157685415 & -0.2580993527 & -0.08987918808 & -0.01584116640 & 
	-0.0006638514355 \\
$S_3(r)$ rel err &
    -0.5321081198 & -0.2613360125 & -0.09056461026 & -0.01599912872 & -0.0007025396085 \\
\hline
\end{tabular}
}
\vspace{1em}
\caption{Successive approximations to $p^{-r\alpha} F_{r\alpha}$ with relative errors for $\alpha= (3,3,2)$.}
\end{table}

Notice that in this case the three-term approximation to $F_{r\alpha}$ is not an improvement over the two-term approximation for $n\le 16$.
The question, which we do not discuss here, of how many terms of a divergent asymptotic series expansion to use for a given argument to obtain the best approximation/least error is called the question of `optimal truncation' or `optimal approximation'.
See \cite{PaKa2001}, for instance, for more details.
\end{example}

\begin{example}[$n < d$, repeated factors]
Consider the trivariate rational function 
\[
	F(x,y,z) = \frac{16}{(4-2x-y-z)^2(4-x-2y-z)}
\]	
in a  neighborhood $\Omega$ of the origin.
Its coefficients $F_\nu$ are all nonnegative, and its denominator $H(x,y,z)= (4-2x-y-z)^2(4-x-2y-z)$ is shown factored over $\CC[x,y,z]$.
Since $H$ contains repeated factors, we first reduce 
\[
\frac{F(x,y,z) \, dx\wedge dy\wedge dz}
{x^{\alpha_1n+1}y^{\alpha_2n+1}z^{\alpha_3n+1}}, 
\]
the differential form of the Cauchy integral of $F$, to a de Rham cohomologous form with no repeated factors, namely 
\[
\frac{[16(2\alpha_3y - \alpha_2z)n +16(2y-z)]/(yz) \, dx\wedge dy\wedge dz}{ (4-2x-y-z)(4-x-2y-z) x^{\alpha_1n+1}y^{\alpha_2n+1}z^{\alpha_3n+1}},
\]
which determines the asymptotics of $F_{r\alpha}$.
The constructive proofs of Lemma~\ref{reduce-factors} (in the case of polynomials) and Lemma~\ref{reduce-powers} to find such a cohomologous form are implemented in \texttt{amgf.sage}.

The singular variety $\var$ of this new form is the same as in the previous example and so the singularity analysis is the same. 
The convenient multiple point $p= (1,1,1)$ is strictly minimal and its critical cone is the conical hull of the vectors $\gamma_1(p)= (2,1,1)$ and $\gamma_2(p)= (1,2,1)$.

Taking $\alpha=(3,3,2)$ again, for instance, and applying Theorem~\ref{asymptotics} we get  
\[
	F_{r\alpha} = 
	\frac{1}{\sqrt{3\pi}} 
	\left( 
	4r^{1/2} +\frac{47}{72} r^{-1/2} -\frac{1967}{41472} r^{-3/2}
	\right) 
	+O(r^{-5/2}),
\]
as $r \to \infty$.

It is a coincidence that the leading coefficient above is the same as the leading coefficient in the previous example without repeated factors.
Using the denominator $(4-2x-y-z)^3(4-x-2y-z)$ instead, for instance, gives a different leading coefficient.

Calling the one-term, two-term, and three-term truncations of this asymptotic formula $S_1(r)$, $S_2(r)$, and $S_3(r)$, respectively and comparing them with the actual values of $F_{r\alpha}$ for small $r$, we get the following table.

\begin{table}[htbp]
\footnotesize{
\begin{tabular}{|l|lllll|}
\hline 
$r$ & 1 & 2 & 4 & 8 & 16 \\ 
\hline
$F_{r\alpha}$ & 
	0.9812164307 & 1.576181132 & 2.485286378 & 3.700576827 & 5.260983954 \\
$S_1(r)$ &
 	1.302940032 & 1.842635464 & 2.605880063 & 3.685270927 & 5.211760127 \\
$S_2(r)$ &
	1.515572607 & 1.992989400 & 2.712196350 & 3.760447895 & 5.264918270 \\ 
$S_3(r)$ &
    1.500123128 & 1.987527184 & 2.710265167 & 3.759765118 & 5.264676873 \\ 
$S_1(r)$ rel err &
	-0.3278824031 & -0.1690505784 & -0.04852305395 & 0.004136084917 & 
	0.009356391776 \\
$S_2(r)$ rel err &
	-0.5445854345 & -0.2644418586 & -0.09130133815 & -0.01617884746 & 	
	-0.0007478289298 \\
$S_3(r)$ rel err &
    -0.5288402039 & -0.2609763838 & -0.09052429168 & -0.01599434190 &       
    -0.0007019445473 \\
\hline
\end{tabular}
}
\vspace{1em}
\caption{Successive approximations to $p^{-r\alpha} F_{r\alpha}$ with relative errors for $\alpha= (3,3,2)$.}
\end{table}

Notice that in this case the two-term or three-term approximation to $F_{r\alpha}$ is not an improvement over the one-term approximation until somewhere between $r = 8$ and $r = 16$.
\end{example}

\begin{example}[$n \ge d$ with no repeated factors]
Consider the bivariate function
\[
F(x,y) = \frac{1}{(1 - 2x -y)(1 - x - 2y)}
\]
in a  neighborhood $\Omega$ of the origin; cf \cite[Example 4.12]{PeWi2008}.

Its coefficients $F_\nu$ are all nonnegative, and its denominator 
$H(x,y)$ factors over $\CC[x,y]$ into irreducible terms $H_1(x,y)= 1 - 2x - y$ and $H_2(x,y)= 1 - x - 2y$, both of which are globally smooth.

The set of non-smooth points of $\var= \{(x,y)\in\Omega : H(x,y) =0 \}$ is $\var' = \{ (x,y)\in\Omega : H(x,y) =\grad H(x,y) =0\}$, which consists of the convenient multiple point $p= (1/3, 1/3)$ of order $r=2$.
The point $p$ is strictly minimal and its critical cone is the conical hull of the vectors $\gamma_1(p)= (2, 1)$ and $\gamma_2(p)= (1/2, 1)$.

By Theorem~\ref{n=d}, there exists $\epsilon\in(0,1)$ such that for any $\alpha$ in this critical cone we get
\[
F_{r\alpha} 
= 
p^{-r\alpha} \left( \frac{G(p)}{p_1 p_2 |\det H'(p)|}  + O(\epsilon^r) \right) 
=
3^{(\alpha_1 + \alpha_2) n} ( 3 + O(\epsilon^r)),
\]
as $r \to \infty$.

Taking $\alpha = (4,3)$, say, letting $S(r)$ be the asymptotic expansion  above, and comparing it to the actual values of $F_{r\alpha}$ for small $r$, we get the following table.

\begin{table}[htbp]
\footnotesize{
\begin{tabular}{|l|lllll|}
\hline 
$r$ & 1 & 2 & 4 & 8 & 16 \\ 
\hline
$2187^{-r} F_{r\alpha}$ & 
	1.960219479 & 2.298399383 & 2.587511051 & 2.809909562 & 2.950100341 \\
$2187^{-r} S(r)$ &
 	3 & 3 & 3 & 3 & 3 \\
$2187^{-r} S(r)$ rel err &
	-0.5304408677 & -0.3052561804 & -0.1594153382 & -0.06765002002 & 
	-0.01691456340 \\
\hline
\end{tabular}
}
\vspace{1em}
\caption{Successive approximations to $p^{-r\alpha} F_{r\alpha}$ with relative errors for $\alpha= (4,3)$.}
\end{table}

\end{example}

\begin{example}[$n \ge d$ with repeated factors]
Consider the bivariate function
\[
F(x,y) = \frac{1}{(1 - 2x - y)^2 (1 - x - 2y)^2},
\]
which is a variation of the function of the previous example.

Since the denominator of $F$ contains repeated factors, we first reduce 
\[
\frac{F(x,y) \, dx\wedge dy}
{x^{\alpha_1 r + 1}y^{\alpha_2 r + 1}}, 
\]
the differential form of the Cauchy integral of $F$ to a de Rham cohomologous form with no repeated factors, which \texttt{amgf.sage} computes.

Reusing the analysis of the previous example and applying Theorem~\ref{n=d}, there exists $\epsilon\in(0,1)$ such that for any $\alpha$ in conical hull of the vectors $\gamma_1(p)= (2,1)$ and $\gamma_2(p)= (1/2,1)$ we get
\[
    F_{r\alpha} 
    = 
    3^{(\alpha_1 + \alpha_2) n} 
    ( 
    -3(2 \alpha_1^2 - 5 \alpha_1 \alpha_2 + 2 \alpha_2^2) r^2 - 
    3(\alpha_1 + \alpha_2) n - 9 + 
    O(\epsilon^r)
    ),
\]
as $r \to \infty$ and for some $\epsilon\in(0,1)$.

Taking $\alpha = (4,3)$, say, letting $S(r)$ be the asymptotic expansion  above, and comparing it to the actual values of $F_{r\alpha}$ for small $r$, we get the following table.

\begin{table}[htbp]
\footnotesize{
\begin{tabular}{|l|lllll|}
\hline 
$r$ & 1 & 2 & 4 & 8 & 16 \\ 
\hline
$2187^{-r} F_{r\alpha}$ & 
	30.72702332 & 111.9315678 & 442.7813138 & 1799.879232 & 7367.545085  \\
$2187^{-r} S(r)$ &
 	0 & 69 & 387 & 1743 & 7335 \\
$2187^{-r} S(r)$ rel err &
	1.000000000 & 0.3835519207 & 0.1259793763 & 0.03160169385 & 0.004417358124 \\
\hline
\end{tabular}
}
\vspace{1em}
\caption{Successive approximations to $p^{-r\alpha} F_{r\alpha}$ with relative errors for $\alpha= (4,3)$.}
\end{table}

\end{example}

\section{Remaining Proofs}\label{sec:proofs}

\begin{proof}[Proof of Lemma~\ref{residue}]
Let $f_r(w,y) = -y^{r\alpha_d -1} \Gcheck(w,y^{-1})$.  
Then for $r \ge 2$,
\begin{align*}
    R_r(w)
    =&
    \sum_{j=1}^n \lim_{y\to h_j(w)^{-1}} y^{-r\alpha_d-1} 
    ( y-h_j(w)^{-1} ) F(w,y) \\
    =&
    \sum_{j=1}^n \lim_{y\to h_j(w)^{-1}} -y^{-r\alpha_d} 
    h_j(w)^{-1} (y^{-1} -h_j(w) ) 
    \frac{\Gcheck(w,y)}{\prod_{k=1}^r ( y^{-1} -h_k(w) )}  \\
    =&
    \sum_{j=1}^n   
    \frac{f_r(w,h_j(w))}{\prod_{k\not= j} (h_j(w)-h_k(w))} \\
    =&
    \int_0^1 d\sigma_1 \int_0^{\sigma_1} d\sigma_2 \cdots \int_0^{\sigma_{r-2}}
    \left( \frac{\del}{\del y} \right)^{n-1} f_r 
    (w, (1-\sigma_1)h_1 + (\sigma_1-\sigma_2)h_2 + \cdots \\
     &
    (\sigma_{r-2}-\sigma_{n-1})h_{n-1} + \sigma_{n-1}h_n ) \; d\sigma_{n-1} \\
     & \text{(by \cite[Chapter 4, Section 7, equations (7.7) and (7.12)]
    {DeLo1993})} \\
    =& 
    \int_\Delta  
    \left( \frac{\del}{\del y} \right)^{n-1} (-1)^{n-1} 
    f_r(w, s_1h_1 +\cdots+ s_{n-1}h_{n-1} +(1-\sum_{j=1}^{n-1}s_j)h_n) \; ds \\
     & 
    \text{(by the change of variables 
    $(s_1,\ldots,s_{n-1}) = 
    (1-\sigma_1,\sigma_1-\sigma_2,\ldots,\sigma_{r-2}-\sigma_{n-1})$),}
\end{align*}
as desired.  

Notice that the $(-1)^{n-1}$ cancels with the $(-1)^{n-1}$ in the definition of $f_r$.

For $n=1$, we have 
$R_r(w)= \lim_{y\to h_0(w)^{-1}} y^{-r\alpha_d-1} (y-h_0(w)^{-1}) F(w,y) = f_r(w,h(w))$.
\end{proof}

\begin{proof}[Proof of Lemma~\ref{FL_integral}] 
First, for $n \ge 2$,
\begin{align*}
    &
    \left( \frac{\del}{\del y} \right)^{n-1} (-1)^{n-1} f(w,y) \\ 
    &=  
    \left( \frac{\del}{\del y} \right)^{n-1} 
    (-1)^n y^{r\alpha_d -1} \Gcheck(w,y^{-1}) \\
    &=
    -\sum_{j=0}^{n-1} \binom{n-1}{j} 
    \left( \frac{\del}{\del y} \right)^{n-1-j} y^{r\alpha_d -1}
    (-1)^{n-1} \left( \frac{\del}{\del y} \right)^j \Gcheck(w,y^{-1}) \\
    &=
    -\sum_{j=0}^{n-1} \binom{n-1}{j}
    (r\alpha_d - 1)^{\underline{n-1-j}} y^{r\alpha_d - n + j} 
    (-1)^{n-1} \left( \frac{\del}{\del y} \right)^j \Gcheck(w,y^{-1}) \\
    &=
    -\sum_{j=0}^{n-1} P_j(r) y^{-r\alpha_d} A_j(w,y).\\ 
\end{align*}
Thus
\begin{align*}
    &
    p^{r\alpha} (2\pi\mi)^{1-d}
    \int_{X} \frac{-R(w)}{w^{r \hat{\alpha}+1}} dw \\
    =&
    p^{r\alpha} (2\pi\mi)^{1-d} 
    \int_{X} \frac{1}{w^{r\alphahat  + 1}} 
    \int_\Delta 
    \left( \frac{\del}{\del y} \right)^{n-1} (-1)^{n-1} f(w,y) \Big|_{y=h(w,s)} ds \, dw \\
    &
    \text{(by Lemma~\ref{residue})} \\
    =&
    p^{r\alpha} (2\pi\mi)^{1-d} 
    \sum_{j=0}^{n-1} P_j(r) 
    \int_{X} \frac{1}{w^{r\alphahat  + 1}}
    \int_\Delta h(w,s)^{r\alpha_d} A_j(w,h(w,s)) ds \, dw \\
    =&
    (2\pi\mi)^{1-d} \sum_{j=0}^{n-1} P_j(r)
    \int_X \int_\Delta \frac{\phat^{r\alphahat}}{w^{r\alphahat}} 
    A_j(w,h(w,s)) (p_d h(w,s))^{r\alpha_d} ds
    \frac{dw}{\prod_{m=1}^{d-1} w_m} \\
    =&
    (2\pi)^{1-d} \sum_{j=0}^{n-1} P_j(r)
    \int_{\Xtilde} \int_\Delta 
    \prod_{m=1}^{d-1} \exp(-\mi \alpha_m n t_m) \Atilde_j(t,s)
    (p_d \htilde(t,s))^{r\alpha_d} ds \, dt \\
    & 
    \text{(via the change of variables $w = e(t)$)} \\
    =&
    (2\pi)^{1-d} \sum_{j=0}^{n-1} P_j(r)
    \int_{\Xtilde} \int_\Delta 
    \Atilde_j(t,s) \,\exp(-r\alpha_d \Phitilde(t,s)) ds \, dt,
\end{align*}
which with Lemma~\ref{integral} proves the stated formula for 
$p^{r\alpha}F_{r\alpha}$.

The formula for the case $n = 1$ follows similarly.
\end{proof}

\begin{proof}[Proof of Lemma~\ref{critical_points}]
First $\Phitilde(0,s) = 0$ and 
\[
    \Re\Phitilde(t,s) 
    = 
    -\log |p_d \htilde(t,s)| \\
    \ge
    -\log \sum_{j=1}^n s_j |p_d h_j(e(t))| \\
    >
    0
\]
for $t\not= 0$, because the sum is convex 
and $|h_j(w)^{-1}|>|p_d|$ for $w\not= \phat$ since $p$ is strictly minimal.

Now by the calculation in the proof of Proposition~\ref{Hessian}, for all $l < d$ we have
\[
    \del_l \Phitilde(\theta^*) 
    = 
    -\mi \sum_{j=1}^{r} s_j^* 
    \frac{p_l \del_l H_j(p)}{p_d \del_d H_j(p)} 
    + \mi\frac{\alpha_l}{\alpha_d} 
    =
    0,
\]
where the last equality holds since $p$ is critical for $\alpha$.
Also $\del_l \Phitilde(\theta^*)=0$ for $d \le l \le n + d - 2$ since 
$\Phitilde(0,s)$ is constant.  
Thus $\grad \Phitilde(\theta^*)= 0$.
Now $\det\Phitilde''(\theta^*) \not= 0$, since $p$ is nondegenerate for $\alpha$.
So 
there is a neighborhood of $\theta^*$ in which
$\theta^*$ is the only zero of $\Phitilde'$.
Thus, shrinking $\Xtilde\times\Delta$ if needed, $\theta^*$ is the unique stationary point of $\Phitilde$.
\end{proof}

%
\section{Acknowledgements}

We would like to thank the anonymous referee for her/his thorough and constructive comments on our first draft.
Our presentation is much more user-friendly because of them.
\bibliographystyle{amsalpha}
\bibliography{combinatorics}
\end{document}